\title{Counting partitions of $G_{n,1/2}$ with degree congruence conditions}
\author{Paul Balister\footnote{Mathematical Institute, University of Oxford, Oxford OX2\thinspace6GG, United Kingdom  \texttt{\{balister,powierski,scott,jane.tan\}@maths.ox.ac.uk}}~\footnote{Research supported by EPSRC grant EP/W015404/1.} \;
Emil Powierski\protect\footnotemark[1] \;
Alex Scott\protect\footnotemark[1]~\footnote{Research supported by EPSRC grant EP/V007327/1.} \;
Jane Tan\protect\footnotemark[1]}
\date{}
\newtheorem{theorem}{Theorem}
\newtheorem{lemma}[theorem]{Lemma}
\newtheorem{corollary}[theorem]{Corollary}
\newtheorem{problem}[theorem]{Problem}
\newtheorem*{rep@theorem}{\rep@title}
\newcommand{\newreptheorem}[2]{%
\newenvironment{rep#1}[1]{%
 \def\rep@title{#2 \ref{##1}}%
 \begin{rep@theorem}}%
 {\end{rep@theorem}}}
\newtheorem*{conjecture*}{Conjecture}
\theoremstyle{definition}                    
\theoremstyle{remark}   
\newtheorem*{remark*}{Remark}
\numberwithin{equation}{section}
\newcommand\F{\mathbb{F}}
\newcommand\Z{\mathbb{Z}}
\newcommand\N{\mathbb{N}}
\newcommand\E{\mathbb{E}}
\newcommand\Prb{\mathbb{P}}
\newcommand\eps{\varepsilon}
\newcommand\Po{\mathrm{Po}}
\newcommand\cc{\mathbf{c}}
\newcommand\ca{\mathbf{a}}
\newcommand\id{\mathbbm{1}}
\newcommand\A{\mathbf{A}}
\newcommand\B{B}
\newcommand\ones{\mathbf{1}}
\newcommand\zeroes{\mathbf{0}}
\newcommand\vv{\mathbf{v}}
\newcommand\ee{\mathbf{e}}
\newcommand\bb{\mathbf{b}}
\newcommand\zzeta{\boldsymbol{\zeta}}
\DeclareMathOperator\rank{rk}
\DeclareMathOperator\Hom{Hom}
\newcommand\Mc[1]{\def\temp{#1}M_{\rm{col}}^{\ifx\temp\empty\else(#1)\fi}}
\newcommand\Mcol[1]{\langle\Mc{#1}\rangle}
\begin{document}

\maketitle

\begin{abstract}
For $G=G_{n, 1/2}$, the Erd\H{o}s--Renyi random graph, let $X_n$ be the random variable representing the number of distinct partitions of $V(G)$ into sets $A_1, \ldots, A_q$ so that the degree of each vertex in $G[A_i]$ is divisible by $q$ for all $i\in[q]$. We prove that if $q\geq 3$ is odd then $X_n\xrightarrow{d}{\Po(1/q!)}$, and if $q \geq 4$ is even then $X_n\xrightarrow{d}{\Po(2^q/q!)}$. More generally, we show that the distribution is still asymptotically Poisson when we require all degrees in $G[A_i]$ to be congruent to $x_i$ modulo $q$ for each $i\in[q]$, where the residues $x_i$ may be chosen freely. For $q=2$, the distribution is not asymptotically Poisson, but it can be determined explicitly.\\

Keywords: random graphs, vertex partition, induced subgraph
\end{abstract}

\section{Introduction}
A folklore result of Gallai (see~\cite{problems}, Exercise 5.17) states that every graph $G$ has a vertex partition into two parts $V_1$ and $V_2$ so that all degrees in the induced subgraphs $G[V_1]$ and $G[V_2]$ are even. An easy corollary of this is that there also exists a vertex partition into two parts for which the degrees in $G[V_1]$ are all odd whilst the degrees in $G[V_2]$ are all even. Another corollary in the same vein is a solution to a well-known riddle: given any graph with lights turned on at each vertex and buttons corresponding to each vertex that toggle the status (light on/off) of a vertex together with its neighbourhood, there is a sequence of button-pushes that turns all of the lights off.

Given that Gallai's theorem guarantees the existence of even/even and even/odd partitions into two parts, one line of research that has arisen is to investigate partitions where each part induces a subgraph with all degrees odd. By considering any graph with an odd number of vertices, it is clear that it is not always possible to find a partition into two parts satisfying this condition. However, improving on earlier work of Caro~\cite{Caro94} and Scott~\cite{Alex92}, Ferber and Krivelevich~\cite{FK20} recently proved that any graph without isolated vertices contains a linearly sized induced subgraph with all degrees odd. 

When we allow more than two parts, it is still trivially necessary that $G$ must have an even number of vertices in each part and hence in each component. The following result due to Scott~\cite{Alex01} states that this is in fact sufficient. We shall call a subgraph $H$ of a graph $G$ \emph{odd} if $d_H(v)$ is odd for all $v \in V(H)$, and similarly \emph{even} if all degrees are even.

\begin{theorem}[\cite{Alex01}]\label{thm:alex1}
The vertices of a graph $G$ can be partitioned into sets $A_1, \ldots, A_k$ for some $k$ such that $G[A_i]$ is an odd subgraph for all\/ $i$ if and only if every component of\/ $G$ has even order. 
\end{theorem}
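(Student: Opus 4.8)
If $V(G)=A_1\sqcup\dots\sqcup A_k$ with every $G[A_i]$ odd, then for a component $C$ of $G$ and any $v\in A_i\cap C$ all neighbours of $v$ lie in $C$, so $d_{G[A_i\cap C]}(v)=d_{G[A_i]}(v)$ is odd; hence $G[A_i\cap C]$ is odd and by the handshake lemma $|A_i\cap C|$ is even. Summing over $i$ gives $|C|=\sum_i|A_i\cap C|$ even, so every component has even order.

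\textbf{The idea for the converse.} A naive induction on $|V(G)|$ does not close for the bare statement, so the plan is to strengthen it: for every connected graph $G$ and every $M\subseteq V(G)$ with $|V(G)|\equiv|M|\pmod 2$, there is a partition $V(G)=A_1\sqcup\dots\sqcup A_k$ such that for every $i$ and every $v\in A_i$ the degree $d_{G[A_i]}(v)$ is \emph{even} if $v\in M$ and \emph{odd} if $v\notin M$. (The congruence is necessary: summing degrees inside $A_i$ shows $|A_i\setminus M|$ is even, and summing over $i$ gives $|V(G)|\equiv|M|$.) Taking $M=\emptyset$ in each component recovers exactly the theorem, since a component satisfies the congruence iff it has even order. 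The point of the marked version is that the reductions below, even when started from $M=\emptyset$, unavoidably create instances with $M\neq\emptyset$, so $M$ is the extra data one must carry.

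\textbf{The induction.} I would induct on $n=|V(G)|$; the cases $n\le 2$ are checked directly. For $n\ge 3$, split on the structure of $G$. \emph{If $G$ has a leaf $w$ with neighbour $p$:} if $w\in M$, take $\{w\}$ as a part (degree $0$, even) and apply the hypothesis to $(G-w,\,M\setminus\{w\})$; if $w\notin M$, then $w$ must lie in $p$'s part, so apply the hypothesis to $(G-w,\,M\triangle\{p\})$ and add $w$ to $p$'s part — flipping $p$'s mark compensates exactly for the new edge $wp$. \emph{If $G$ has no leaf but has a cut vertex $c$:} write $G=G_1\cup G_2$ with $V(G_1)\cap V(G_2)=\{c\}$, both $G_i$ connected; the absence of leaves forces $|V(G_i)|\ge 3$, so both sides are strictly smaller than $G$. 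Choose marked sets $M_i$ on the $G_i$ agreeing with $M$ off $c$, and decide whether $c\in M_i$ so that (a) $|V(G_i)|\equiv|M_i|\pmod 2$ on both sides and (b) the two local parities demanded at $c$ add up to the parity demanded by $c$'s membership of $M$; a short computation shows that $|V(G)|\equiv|M|\pmod 2$ makes this system solvable. Apply the hypothesis to each $(G_i,M_i)$ and glue the two parts containing $c$; since $c$ is a cut vertex, all the relevant induced subgraphs and the degree of $c$ split additively, so the degree conditions are met. \emph{If $G$ is $2$-connected and $M\neq\emptyset$:} pick $u\in M$, take $\{u\}$ as a part, and recurse on $(G-u,\,M\setminus\{u\})$, which is connected. \emph{If $G$ is $2$-connected and $M=\emptyset$} (so $n$ is even and $\ge 4$): here invoke the standard fact that a $2$-connected graph on at least $3$ vertices has an edge $uv$ with $G-\{u,v\}$ connected, take $\{u,v\}$ (an induced edge, both degrees odd) as a part, and recurse on $(G-\{u,v\},\,\emptyset)$.

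\textbf{Main obstacle.} The genuine difficulty is in the \emph{formulation}: realising that the marked set $M$ must be threaded through the argument, because the leaf and cut-vertex steps are precisely what conjure marks out of the markless case, and without them the recursion stalls. Once the right statement is in hand, the remaining work is (i) the parity bookkeeping in the cut-vertex case — verifying that $|V(G)|\equiv|M|$ permits a consistent distribution of $c$'s mark across the two sides, which is the one delicate routine check — and (ii) the $2$-connected markless base case, which leans on the separate, standard graph-theoretic lemma about deleting the endpoints of a suitably chosen edge. The necessity direction and the passage from the marked statement to the theorem are entirely routine.
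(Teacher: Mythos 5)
The paper does not prove Theorem~\ref{thm:alex1}; it is quoted from \cite{Alex01} without proof, so there is no in-paper argument to compare yours against. Judged on its own terms, your proof is correct and complete. The strengthening to a marked set $M$ with the constraint $|V(G)|\equiv|M|\pmod 2$ is the right invariant, and the one step you flag as delicate does work out: condition (a) actually \emph{forces} $[c\in M_i]\equiv |V(G_i)|-|M\cap (V(G_i)\setminus\{c\})|\pmod 2$ for each $i$, and then $[c\in M_1]+[c\in M_2]\equiv |V(G_1)|+|V(G_2)|-(|M|-[c\in M])= (n+1)-|M|+[c\in M]\equiv 1+[c\in M]$, which is exactly condition (b); so the system is not merely solvable but determined. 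The four cases (leaf; leafless with a cut vertex; $2$-connected with $M\ne\emptyset$; $2$-connected with $M=\emptyset$) are exhaustive for connected graphs on at least $3$ vertices, the gluing at the cut vertex splits degrees additively because there are no edges between $V(G_1)\setminus\{c\}$ and $V(G_2)\setminus\{c\}$, and the standard fact you invoke in the last case is genuinely standard: for any $u$, either $G-u$ is $2$-connected, or $2$-connectivity of $G$ forces $u$ to have a neighbour among the non-cut vertices of a leaf block of $G-u$, and any such neighbour $v$ gives $G-\{u,v\}$ connected. I cannot tell you whether this coincides with Scott's original argument, but as a self-contained proof of the cited theorem it stands.
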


More generally, we can consider congruence conditions modulo $q$ for $q\geq 2$. Caro, Krasikov and Roditty~\cite{CKR94a} asked whether there exists a number $k=k(q)$ such that every graph $G$ can be partitioned into $k$ vertex-disjoint classes $A_1,\dots, A_k$ in such a way that all degrees in all induced subgraphs $G[A_i]$ are divisible by~$q$. While this deterministic problem is very much open, more can be said in the context of random graphs. 

Henceforth let $G_{n,1/2}$ be the standard binomial random graph with vertex set $[n]$ and edge probability $1/2$. Scott~\cite{Alex01} formulated a random version of Caro, Krasikov and Roditty's problem in which we ask for partitions satisfying fixed degree residue conditions to exist for almost every $G_{n,1/2}$ (i.e.\ with probability tending to 1 as $n\to \infty$).

\begin{problem}[\cite{Alex01}]\label{prob:randomk} Let $q\geq 2$ and $0\leq x<q$ be integers. Does there exist a number $k=k(q,x)$ such that almost every $G_{n,1/2}$ \textup{(}with $n$ even for simplicity\textup{)} can be partitioned into $k$ vertex-disjoint classes $A_1,\dots, A_k$ in such a way that in all the induced subgraphs $G[A_i]$ all degrees are $x \bmod q$?
\end{problem}

It was shown in \cite{Alex01} that three parts suffice for $q=2$ and $x=1$. Ferber, Hardiman and Krivelevich~\cite{FHKtalk, FHKarxiv} recently solved Problem~\ref{prob:randomk} for general $q$, showing that $k(q,x) = q+1$ is sufficient for all $x$.

\begin{theorem}[Ferber, Hardiman and Krivelevich~\cite{FHKarxiv}]\label{thm:fhkmain}
For all\/ $q\in \N$ and\/ $0\leq x<q$, almost every $G_{n,1/2}$ has a vertex partition into $q+1$ sets $A_1,\dots,A_{q+1}$ such that in each induced subgraph $G[A_i]$ all degrees are $x \bmod q$.
\end{theorem}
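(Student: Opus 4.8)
The natural plan is the second moment method. Let $X$ count the ordered partitions $(A_1,\dots,A_{q+1})$ of $V(G_{n,1/2})$ into near-balanced parts (each of size $(1\pm n^{-1/3})\,n/(q+1)$, and, if $q$ is even and $x$ odd, of even size) such that every induced subgraph $G[A_i]$ has all degrees $\equiv x\pmod q$. Since the existence of one such partition yields the theorem, it is enough to show $\Pr[X>0]\to1$, which will follow from $\E[X]\to\infty$, $\E[X^2]=(1+o(1))\E[X]^2$ and Chebyshev's inequality.

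\emph{First moment.} Almost every partition is near-balanced, and for a near-balanced partition the residues $d_{G[A_i]}(v)\bmod q$, over $i\in[q{+}1]$ and $v\in A_i$, are each asymptotically uniform and jointly asymptotically independent, so the partition is valid with probability $(1+o(1))q^{-n}$ (times a constant factor $2^{q+1}$ when $q$ is even, forced by the parity identity $\sum_{v\in A_i}d_{G[A_i]}(v)=2e(G[A_i])$). Summing over all $(q+1)^n$ partitions gives $\E[X]$ of order $(1+1/q)^n$, hence $\E[X]\to\infty$. The one non-trivial input is the asymptotic uniformity and independence of the degree residues, which reduces to a character-sum estimate for the joint distribution modulo $q$ of the row sums of a uniformly random symmetric $\{0,1\}$-matrix of size $\Theta(n)$.

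\emph{Second moment.} Write $\E[X^2]=\sum_{P,P'}\Pr[P,P'\text{ both valid}]$ and organise the sum by how much $P$ and $P'$ overlap, measured by the intersection matrix $(|A_i\cap A'_j|)_{i,j}$, or equivalently by $\omega(P,P'):=\max_\pi\sum_i|A_i\cap A'_{\pi(i)}|$. When $\omega(P,P')$ is close to its minimum value $n/(q+1)$ the two partitions behave as if independent: for each vertex $v$, conditioning on the edges joining $v$ to $A_i\cap A'_j$ (where $A_i,A'_j$ are the parts of $v$ in $P,P'$) decouples the two degree conditions of $v$ into conditions on disjoint, linear-sized edge sets, each of probability $(1+o(1))/q$, so that $\Pr[P,P'\text{ both valid}]=(1+o(1))q^{-2n}$ for such pairs; being almost all of the pairs, they sum to $(1+o(1))\E[X]^2$. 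The remaining, higher-overlap pairs fall into two families: (a) pairs within small Hamming distance of a relabelling of $P$ — the relabellings themselves, at most $(q+1)!$ through each $P$, are jointly valid precisely when $P$ is (all target residues equal $x$) and so contribute $O(\E[X])$, while a relabelling with a few vertices moved is essentially never jointly valid, since moving a vertex $w$ alters by $\pm1$ the in-part degree of each neighbour of $w$ in $w$'s old and new parts, and (for $x\not\equiv0$) validity of $P$ forces $w$ to have such a neighbour; and (b) pairs in which $P'$ re-splits some $j\ge2$ of the parts of $P$ among themselves while keeping the others, which are few enough and of small enough joint probability that, after the book-keeping, each value of $j$ contributes $o(\E[X]^2)$. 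Therefore $\E[X^2]=(1+o(1))\E[X]^2$.

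The genuinely hard part is this last step done uniformly: bounding $\Pr[P,P'\text{ both valid}]$ in terms of the overlap structure precisely enough that the growing number of pairs at a given overlap is always beaten by the shrinking joint probability, and pushing the decoupling for generic pairs cleanly through all the edge-dependencies created by overlapping parts; I would organise this by exposing the edges of $G$ part by part and tracking how many degree conditions remain unconstrained after each part is revealed. (An alternative avoiding the second moment: take a set $R$ of linear size to be the part $A_{q+1}$, split $V\setminus R$ into $q$ roughly equal parts arbitrarily, and then kill all the degree residues in those parts by moving suitably chosen subsets of $R$ into them, where each move amounts to solving a random linear system over $\Z_q$ whose coefficient matrix — a block of the adjacency matrix of $G$ — has full rank with high probability; the obstacle there is to make the moved sets disjoint and themselves degree-congruent after relocation, which one would handle by iterating the reservoir construction down to a bounded remainder finished by direct inspection.)
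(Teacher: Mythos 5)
First, a framing point: the paper does not prove this statement at all --- it is quoted from Ferber, Hardiman and Krivelevich --- so there is no in-paper proof to compare against. Judged on its own terms, your primary plan (second moment for the count of good $(q+1)$-part partitions) is a sound route, and it is essentially the machinery this paper develops for the harder $q$-part problem in Sections 2--4, specialised to $k=2$: the Fourier expansion \eqref{eq:mainexp} goes through verbatim with boxes indexed by $[q+1]^k$ but roots of unity still of order $q$; the generic pairs ($L=[q+1]^2$, all boxes large) give $(1+o(1))\E[X]^2$; and every other overlap pattern is controlled by an analogue of Lemma~\ref{lem:ineq_composite}, where one now only needs the \emph{strict} inequality $\sum_{\cc\in L}N_{\cc,L}<(q+1)^k$ for $L\ne[q+1]^k$ (duplicates and relabellings sit in combinatorial subspaces and contribute only $O(\E[X])=o(\E[X]^2)$, since here $\E[X]$ is exponentially large --- this is why $q+1$ parts is easier than $q$ parts, where the delicate equality analysis is needed).

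That said, as written your argument has a genuine gap exactly where you flag one. The step ``$\Prb[P,P'\text{ both valid}]$ is bounded uniformly in the overlap structure so that the entropy of the number of pairs at each overlap is always beaten'' is the entire content of the proof, and your vertex-by-vertex conditioning does not deliver it: it correctly decouples the two conditions \emph{at a single vertex}, but the $2n$ degree conditions share edges across vertices, and the joint estimate over all of them (and over all intersection matrices, not just whole-part re-splittings) is asserted rather than proved. This is precisely what the conflict/box/most-common-vertex analysis of Lemmas~\ref{lemma:removebad}--\ref{lemma:zetachoice} is built to do, so the plan is completable, but it is a plan and not a proof. Two smaller remarks: (i) when $q$ is even and $x$ is odd, the handshake lemma forces every part and hence $n$ to be even (as the paper notes for Theorem~\ref{thm:main2}(3)), so your first-moment claim $\E[X]=\Theta((1+1/q)^n)$ needs that restriction; (ii) your parenthetical ``alternative'' --- a reservoir plus surjectivity of random linear maps over $\Z/q\Z$ to correct residues --- is in fact the strategy of the original Ferber--Hardiman--Krivelevich proof, so that sketch, not the second-moment one, is the reconstruction of their argument.
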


It is not hard to show (by a first moment argument) that Theorem~\ref{thm:fhkmain} does not hold for partitions into $q-1$ parts. However, a natural question asked in~\cite{FHKtalk} is whether the theorem still holds with $q$ parts instead of $q+1$. In this paper, we provide a negative answer and moreover determine the asymptotic distribution of the number of `good' partitions into $q$ parts. The situation differs between the cases $q>2$ and $q=2$. Beginning with the former, for $G=G_{n, 1/2}$, let $X_n$ be the random variable representing the number of partitions of $V(G)$ into disjoint sets $A_1,\ldots, A_q$ so that the degree of each vertex in $G[V_i]$ is divisible by $q$ for all $i\in[q]$, i.e.
\[
 X_n=\Big|\big\{\{A_1,\dots, A_q\}: V(G)=\coprod_{i=1}^q A_i \text{ and }
 q\mid d_{G[A_i]}(v) \text{ for all }i\in[q],\, v\in A_i\big\}\Big|.
\]
Although we number the parts, we only consider partitions up to permutation of the parts. Our first result gives the distribution of $X_n$ as $n\to \infty$.

\begin{theorem}\label{thm:main}
If\/ $q\geq 3$ is odd, then  $X_n\xrightarrow{d}{\Po(1/q!)}$. If\/ $q \geq 4$ is even, then $X_n\xrightarrow{d}{\Po(2^q/q!)}$.
\end{theorem}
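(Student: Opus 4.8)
The plan is to establish Poisson convergence by the method of moments: since a $\Po(\lambda)$ random variable has $r$-th factorial moment $\lambda^r$, it suffices to show that $\E[(X_n)_r]\to\lambda^r$ for every fixed $r\ge1$, where $(X_n)_r=X_n(X_n-1)\cdots(X_n-r+1)$ and $\lambda=c_q^q/q!$ with $c_q=\gcd(2,q)$ (so $c_q=1$ for odd $q$ and $c_q=2$ for even $q$). Now $\E[(X_n)_r]$ is the expected number of $r$-tuples of distinct good partitions; I would replace unordered by ordered partitions, each unordered partition into $q$ nonempty parts accounting for $q!$ ordered ones, and note that partitions with an empty part or a part of bounded size are so unlikely to be good that they can be neglected throughout (this is already visible at $r=1$). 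Thus it is enough to estimate $\frac1{(q!)^r}\sum_{(\mathcal P_1,\dots,\mathcal P_r)}\Prb[\mathcal P_1,\dots,\mathcal P_r\text{ all good}]$, the sum being over $r$-tuples of ordered partitions of $[n]$ into $q$ parts that are pairwise distinct as unordered partitions.

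The main tool is a character expansion. Set $\omega=e^{2\pi i/q}$ and, for an $r$-tuple as above, let $c^{(j)}(v)\in[q]$ be the part of $\mathcal P_j$ containing the vertex $v$. Since the degree of $v$ in $G[A^{(j)}_i]$ depends only on edges internal to $A^{(j)}_i$, expanding each degree-divisibility condition into additive characters of $\Z/q$ and taking expectation over the edges of $G$ gives
\[
\Prb[\mathcal P_1,\dots,\mathcal P_r\text{ all good}]=q^{-rn}\sum_{t=(t^{(1)},\dots,t^{(r)})\in((\Z/q)^n)^r}\ \prod_{1\le u<v\le n}\frac{1+\omega^{s_{uv}(t)}}{2},\qquad s_{uv}(t)=\sum_{j:\,c^{(j)}(u)=c^{(j)}(v)}\bigl(t^{(j)}_u+t^{(j)}_v\bigr).
\]
Each factor has modulus at most $1$, and modulus $1$ precisely when $s_{uv}(t)\equiv0\pmod q$; call $t$ a \emph{ground state} if $s_{uv}(t)\equiv0$ for all pairs $u,v$, so that the right-hand side is $q^{-rn}$ times the number of ground states plus an error from the remaining $t$. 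Carrying this out for $r=1$ recovers $\Prb[\mathcal P\text{ good}]=\prod_i p(|A_i|)$ with $p(a)=c_q q^{-a}(1+o(1))$ for large $a$ (and $p(a)=O(1)$ always); summing over part sizes then gives $\E[X_n]\to\lambda$, which fixes the constant and is the model for the general argument.

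For general $r$, I would split the sum according to how generic the tuple is. Call $(\mathcal P_1,\dots,\mathcal P_r)$ \emph{generic} if every part of every $\mathcal P_j$ has size $\Omega(n)$ and every pairwise intersection $A^{(j)}_i\cap A^{(j')}_{i'}$ with $j\ne j'$ has size $\Omega(n)$; a Chernoff bound shows that all but an exponentially small fraction of the $q^{rn}$ ordered $r$-tuples are generic, and a generic tuple is automatically pairwise distinct. For a generic tuple I claim the ground states are precisely the $t$ with each $t^{(j)}$ constant on the parts of $\mathcal P_j$ and taking values in $\{x\in\Z/q:2x\equiv0\}$: genericity provides, inside any part $A^{(j)}_i$, three vertices that are pairwise separated by every other $\mathcal P_{j'}$, so that $s_{uv}(t)=t^{(j)}_u+t^{(j)}_v$ for each of these pairs and hence $t^{(j)}$ is constant and $2$-torsion on $A^{(j)}_i$; the remaining "mixed" equations $s_{uv}(t)\equiv0$ then hold automatically because $2t^{(j)}\equiv0$ on each part. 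This is exactly where $q\ge3$ is used — for $q=2$ no part of one partition can be split into three pieces by another, the ground states proliferate, and the limit ceases to be Poisson. Counting, a generic tuple has exactly $c_q^{qr}$ ground states, so $\Prb[\mathcal P_1,\dots,\mathcal P_r\text{ all good}]=c_q^{qr}q^{-rn}(1+o(1))$; multiplying by the number of generic tuples and dividing by $(q!)^r$ gives $\lambda^r$.

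Two error estimates remain, and these I expect to be the hard part. First, for a fixed generic tuple one must bound the contribution of the non-ground-state $t$: the key point is that if $t$ differs from the nearest ground state in $d$ of its $rn$ coordinates, then genericity forces $s_{uv}(t)\ne0$ for $\Omega(dn)$ pairs $uv$ — a single altered coordinate at a vertex $v$ already spoils all $\Omega(n)$ pairs joining $v$ to a vertex agreeing with it in exactly the relevant one coordinate — so that such a $t$ contributes at most $\bigl(\cos(\pi/q)\bigr)^{\Omega(dn)}$ in modulus, and summing over the at most $\binom{rn}{d}q^d$ choices at distance $d$ gives total error $o(q^{rn})$. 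Second, and more delicate, one must show that the non-generic but pairwise distinct $r$-tuples contribute only $o(1)$ to $\E[(X_n)_r]$; here the crude bound $\Prb[\text{all good}]\le\Prb[\mathcal P_1\text{ good}]$ loses too much once $r\ge2$, and one has to retain all $r$ partitions and argue that, although the number of ground states of a tuple grows as parts merge across the partitions, this growth is always beaten by the corresponding decrease in the number of such tuples. Quantifying this trade-off uniformly over all degeneracy patterns is the main obstacle; the remaining ingredients — the character identity, the $\Z/q$-linear algebra identifying the ground states, and the concentration and counting estimates — are comparatively routine.
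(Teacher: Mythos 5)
Your outline coincides with the paper's strategy (method of moments, additive character expansion over $\Z/q\Z$, identification of the ``ground states'' as the conflict-free assignments, main term from tuples that intersect generically), and your computation of the constant $c_q^{q}/q!$ is right. However, the step you yourself flag as the main obstacle --- showing that non-generic but pairwise distinct $r$-tuples contribute $o(1)$ --- is a genuine gap, and it is precisely where the substance of the proof lies; the trade-off ``more ground states versus fewer such tuples'' does not quantify itself. The paper's resolution is algebraic: for a tuple with set $L$ of large boxes $V_\cc=\bigcap_j A^{(j)}_{c_j}$, the number of admissible character values at a vertex of $V_\cc$ is at most $N_{\cc}=q^k/|\langle M^{(\cc)}\rangle|$, where $\langle M^{(\cc)}\rangle\le(\Z/q\Z)^k$ is generated by the indicator columns $(\id\{c_j=c'_j\})_{j}$ over $\cc'\in L$. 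The key lemma (Lemma~\ref{lem:ineq_composite}) is that $\sum_{\cc\in L}N_{\cc}\le q^k$, with equality if and only if $L$ is a combinatorial subspace --- which for distinct partitions forces $L=[q]^k$ --- and its proof is a delicate induction on $k$ using $q\ge 3$. Feeding the strict inequality into the multinomial theorem gives a bound of the form $q^{O(1)}(1-q^{-k})^{n}$ uniformly over degeneracy patterns. Without this lemma, or a substitute for it, the argument is incomplete.

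Two further points need repair even in the generic case. First, your genericity only constrains \emph{pairwise} intersections, but your identification of the ground states requires three vertices of a part that are pairwise separated by every other partition \emph{simultaneously}; this can fail under your hypothesis (e.g.\ $q=r=3$ with the vertices of $A^{(1)}_1$ having $(c^{(2)},c^{(3)})$-coordinates supported on the cross $\{(1,1),(2,1),(3,1),(1,2),(1,3)\}$: all pairwise intersections are large, yet no three vertices are separated by both $\mathcal P_2$ and $\mathcal P_3$). The fix is to demand that all $q^r$ boxes $\bigcap_j A^{(j)}_{c_j}$ have size $\Omega(n)$, which still holds for all but an exponentially small fraction of tuples and is what the paper uses. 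Second, in bounding non-ground-state $t$ for a fixed generic tuple, ``distance $d$ from the nearest ground state spoils $\Omega(dn)$ pairs'' is not immediate, since alterations at both endpoints of a pair can cancel in $s_{uv}(t)$; the paper handles this with a two-tier argument (a crude union bound when there are at least $Kn$ conflicted pairs, and otherwise a replacement of the few ``bad'' vertices by duplicates of a most common vertex, with the entropy cost beaten by the removed conflicts). These are repairs within your framework; the missing combinatorial-algebraic lemma for degenerate tuples is the essential omission.
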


With some minor modifications to the proof of Theorem~\ref{thm:main}, we can obtain the following stronger statement which allows for greater flexibility in fixing congruence conditions. Specifically, for non-negative integers $a_0,\dots, a_{q-1}$ with $\sum_{x=0}^{q-1}a_x=q$ we now let $X_n=X_n^{(a_0,\dots , a_{q-1})}$ be the number of $q$-tuples $(\{A_{0,1},\dots,A_{0,a_0}\}, \{A_{1,1},\dots,A_{1,a_1}\},\dots)$, where each entry is an (unordered, possibly empty) set of parts and each part satisfies the degree condition given by its first index, i.e.
\begin{align*}
X_n=\Big|\big\{&(\{A_{0,1},\dots,A_{0,a_0}\},\{A_{1,1},\dots,A_{1,a_1}\},\dots \{A_{q-1,1},\dots,A_{q-1,a_{q-1}}\}): \\ 
&V(G)=\coprod_{(x,y)\in T} A_{x,y}\text{ and }
 d_{G[A_{x,y}]}(v)\equiv x \text{ for all }(x,y)\in T, \, v\in A_{x,y}\big\}\Big|,
\end{align*}
where $T=\{(x,y): x=0,\dots,q-1, y=1,\dots, a_x\}$ and the congruence is mod $q$. This definition ensures that (as before) two partitions that can be transformed into each other by reordering parts with the same degree condition are considered the same and only counted once. Setting $a_0=q$ and $a_i=0$ for $i \neq 0$ we would get the random variable we considered in Theorem~\ref{thm:main}.
\begin{theorem}\label{thm:main2}
Let $G=G_{n,1/2}$ and let $X_n=X_n^{(a_0,\dots , a_{q-1})}$ be as defined above for any non-negative integers $a_0,\dots,a_{q-1}$ with $\sum_{x=0}^{q-1}a_x=q$. 
For $q$ even, write $c=\sum_{x=0}^{q/2-1} a_{2x}$ for the number of parts where the degree condition is even.
\begin{enumerate}[label={(\arabic*)}]
\itemsep=0mm
 \item If\/ $q\ge 3$ is odd, then  $X_n\xrightarrow{d}{\Po(1/\prod a_x!)}$.
 \item If\/ $q\ge 4$ is even and $c>0$, then $X_n\xrightarrow{d}{\Po(2^c/\prod a_x!)}$.
 \item If\/ $q\ge 4$ is even and $c=0$, then $X_n\xrightarrow{d}{\Po(2/\prod a_x!)}$ as $n$ runs over
 \emph{even} integers.
\end{enumerate}
\end{theorem}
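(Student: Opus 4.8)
The plan is to prove Theorem~\ref{thm:main2} by the method of moments, showing that for each fixed $r$ the $r$-th factorial moment $\E[(X_n)_r]$ converges to $\lambda^r$, where $\lambda$ is the claimed Poisson parameter in each of the three cases; since the Poisson distribution is determined by its moments, this yields convergence in distribution. The starting point is a first moment computation. A candidate partition is specified by an assignment of the $n$ vertices to the parts (respecting the multiplicities $a_0,\dots,a_{q-1}$), together with the degree-residue requirement on each vertex. Conditioning on the part sizes $n_1,\dots,n_q$ and the location of a vertex $v$ in a part of size $n_i$, the event that $d_{G[A_i]}(v)\equiv x_i \pmod q$ is, for a \emph{fixed} graph on the other vertices, an event of probability essentially $1/q$ by a standard Fourier/character argument over $\Z_q$ (this is where the parity of $q$ first enters: the $q$ characters give $q$ terms, one of which is the main term $1/q$ and the others of which are exponentially small unless a character takes only values $\pm1$, which happens for the nontrivial character of order $2$ when $q$ is even). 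Summing over all $\approx q^n$ vertex-assignments and dividing by the appropriate symmetry factor $\prod a_x!$, the main term $q^n \cdot q^{-n}/\prod a_x!$ gives the constant $1/\prod a_x!$, and one must then carefully track the correction terms coming from the order-$2$ character; these conspire to multiply the answer by $2^c$ when $q$ is even and $c>0$, and by $2$ in the exceptional case $c=0$ (with $n$ even), essentially because in the all-odd case the single surviving extra character is globally constrained rather than acting independently on the $c$ even parts.

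Next I would set up the higher factorial moments. Here $\E[(X_n)_r]$ is a sum over ordered $r$-tuples of \emph{distinct} good partitions $\mathcal P^{(1)},\dots,\mathcal P^{(r)}$ of the probability that all of them are simultaneously good. The natural bookkeeping is by the \emph{joint colour class} of each vertex, i.e.\ the $r$-tuple recording which part it lies in under each of the $r$ partitions; this refines the vertex set into at most $q^r$ cells. The dominant contribution should come from configurations in which the $r$ partitions are ``generic'': the cells are all of roughly equal size $n/q^r$, and the degree events for different partitions are asymptotically independent, so that the joint probability factorises as (single-partition probability)$^r$. Summing the generic term reproduces $\lambda^r$. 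The work is to show that all non-generic configurations — where two partitions share structure, or where some cells are unusually large or small — contribute $o(\lambda^r)$, which one does by a union bound: bound the number of such configurations combinatorially and bound each probability by a product of the per-vertex factors, using that forcing extra degree congruences can only cost additional factors of order $1/q$ (or, in the degenerate directions, that the relevant character sums are genuinely smaller, not larger). A subtlety worth isolating is the requirement that the $r$ partitions be \emph{distinct}: one must check that the contribution from $r$-tuples with a repeated partition, or more generally with partitions that agree up to the allowed relabelling of equal-residue parts, is negligible, which again follows from a first-moment-type estimate since two coincident good partitions are far rarer than one.

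The main obstacle, I expect, is the careful analysis of the character sums in the even-$q$ case, and in particular pinning down the correct constant in the three regimes. For $q$ even the order-$2$ character $\chi$ of $\Z_q$ satisfies $\chi(d_{G[A_i]}(v)) = (-1)^{e(v,A_i)}$, and the relevant sums do \emph{not} decay: one obtains a genuine secondary main term that must be evaluated exactly. Disentangling how these $\pm1$ contributions combine across the $c$ even parts (giving the factor $2^c$), and understanding why the all-even-forbidden case $c=0$ behaves differently and forces the parity restriction on $n$ (the sole surviving global character imposes a single linear constraint over $\F_2$, contributing a factor $2$ rather than $2^0=1$, and is only consistent when $n$ is even) is the delicate combinatorial heart of the argument. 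A secondary difficulty is making the ``non-generic configurations are negligible'' union bound genuinely uniform over the growing number $q^r$ of cell-types while $r$ is fixed but the part sizes range over all near-balanced vectors; this requires a clean large-deviation-style estimate for the number of vertices landing in each joint cell, together with the observation that each imposed degree congruence past the first in any given partition is ``free'' in the sense of contributing a bounded factor, so that the whole sum telescopes against the $\lambda^r$ main term.
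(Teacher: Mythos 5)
Your overall architecture matches the paper's: method of moments, a character-sum (discrete Fourier) expansion over $\Z/q\Z$ of the probability that an $r$-tuple of partitions is simultaneously good, a split into ``generic'' tuples (all joint cells large and structureless) that deliver $\lambda^r$, and a claim that everything else is negligible. However, there is a genuine gap at the step you describe as a ``union bound'' over non-generic configurations. After the Fourier expansion, the quantity you must control for a fixed $r$-tuple is the number of assignments of roots of unity to vertices that produce no cancellation (no ``conflicts''), and this number is governed by $\prod_{\cc} N_{\cc}^{|V_\cc|}$, where $V_\cc$ are the joint cells and $N_\cc$ is the number of solutions of a linear system over $\Z/q\Z$ whose matrix records which coordinates of $\cc$ and $\cc'$ agree. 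To beat the normalisation $q^{-rn}$ you need the \emph{strict} inequality $\sum_{\cc} N_{\cc} < q^r$ whenever the set of occupied cells is not a ``combinatorial subspace'' (the degenerate case where two partitions coincide up to relabelling, excluded by distinctness), together with a separate device for cells that are small or for vertices involved in many conflicts. This inequality, with its equality characterisation, is the technical heart of the paper (its Lemma~\ref{lem:ineq_composite}, proved by induction on $r$ using compressions), and your proposal contains no substitute for it; the heuristic that ``each imposed degree congruence past the first contributes a bounded factor'' does not by itself show the non-generic sum is $o(1)$, because the number of root assignments per tuple can grow exponentially in $n$ exactly in the degenerate directions.

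A second, smaller issue is the determination of the constants for $q$ even. The actual mechanism is not that the order-$2$ character ``acts independently on the $c$ even parts'': every part, regardless of its residue, contributes a free sign $\pm1$ per partition, giving $2^{q}$ choices, but the phase factor $\prod_{t,j}\zeta_{t,j}^{-x|(A^{(j)})_t|}$ annihilates every tuple in which some part with odd residue condition has odd size. Counting ordered partitions subject to these parity constraints gives $\sim 2^{-(q-c)}q^n$ when $c>0$ and $\sim 2^{-(q-1)}q^n$ when $c=0$ and $n$ is even (the extra $(-q)^n$ term in the generating function $2^{-q}\sum_i\binom{q}{i}(q-2i)^n$), and the constants $2^{q}\cdot 2^{-(q-c)}=2^{c}$ and $2^{q}\cdot 2^{-(q-1)}=2$ emerge from this product. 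Your intuition about a single surviving global $\F_2$ constraint when $c=0$ is morally right, but the proposal does not carry out this computation, and without it the three constants are not actually pinned down.
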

Note that if $q$ is even and $c=0$ then all degrees in each $G[A_{x,y}]$ must be odd, so each $|A_{x,y}|$ is even
and hence $n=\sum|A_{x,y}|$ must also be even.

For a cleaner presentation, Theorem~\ref{thm:main} and  Theorem~\ref{thm:main2} feature the random graph with edge probability $p=\frac{1}{2}$, although our proofs actually extend to any $p=p(n)$ with $C \frac{\log n}{n}\le p\le 1-C \frac{\log n}{n}$
for some large constant~$C$. The proofs of these theorems will be presented in Sections~\ref{sec:mainproof}--\ref{sec:LA}.

The case where $q=2$ is really exceptional, which is especially intriguing given that the following question was one of the starting points in the random setting.

\begin{problem}[\cite{Alex01}]\label{prob:oddodd}
For $n$ even, what is the probability that $G_{n,1/2}$ can be partitioned into two sets, each inducing odd subgraphs?
\end{problem}
It is known~\cite{Alex01} that the answer is at least $1/2+o(1)$. We show that the answer to Problem~\ref{prob:oddodd} is $2/3+o(1)$. In fact, we provide the full distributions of $X^{(2,0)}_n$, $X^{(1,1)}_n$ and $X^{(0,2)}_n$, recalling that these are the number of bipartitions of $G_{n,1/2}$ inducing even/even, even/odd and odd/odd partitions respectively. The exact distributions are given in Section~\ref{sec:final}, and lead to the following asymptotic distributions.

\begin{theorem}\label{thm:q=2asympt}
Let\/ $G=G_{n,1/2}$. Then $X_n^{(2,0)} \xrightarrow{d} X$ and\/ $X^{(1,1)}_n\xrightarrow{d}X$ where 
\[
\Prb(X=2^k)=c \prod_{i=1}^{k} (2^i-1)^{-1}
\]
for $k \in \N\cup\{0\}$ with constant $c=\prod_{i=0}^{\infty}(1-2^{-2i-1})=(\sum_{j=1}^{\infty}\prod_{i=1}^{j} (2^i-1)^{-1})^{-1}$, and\/ $\Prb(X=x)=0$ if\/ $x \neq 2^k$ for any $k\in \N\cup\{0\}$. Furthermore, $X^{(0,2)}_n \xrightarrow{d} Z$ where $n$ only runs over even integers and  
\[
\Prb(Z=2^k)= c 2^{-k} \prod_{i=1}^{k} (2^i-1)^{-1}
\]
for $k\in \N\cup\{0\}$, $\Prb(Z=0)=\frac{1}{3}$ and\/ $\Prb(Z=x)=0$ if\/ $x\neq 0$ and $x \neq 2^k$ for any $k\in \N\cup\{0\}$.
\end{theorem}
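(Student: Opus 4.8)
\textbf{Proof proposal for Theorem~\ref{thm:q=2asympt}.}

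The plan is to reduce everything to a linear-algebraic description of good bipartitions over $\F_2$ and then analyse the resulting random quadratic/linear system. Given a bipartition $V(G)=A_1\sqcup A_2$, encode it by its indicator vector $z\in\F_2^n$ with $z_v=0$ if $v\in A_1$ and $z_v=1$ if $v\in A_2$. Working mod $2$, the degree of $v$ inside its own part is $\sum_{w}A_{vw}(1+z_v+z_w+1)=\dots$; more carefully, $d_{G[A_i]}(v)=\sum_{w\neq v}A_{vw}\mathbbm{1}[z_w=z_v]$, and one checks this is an affine-quadratic function of $z$ over $\F_2$. Requiring this to equal the prescribed residue ($0$ in the even/even case, $z_v$ in the even/odd case where part $1$ is the even part, $1$ in the odd/odd case) gives a system of $n$ equations. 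The first step is to carry out this computation and show that in each of the three cases the system is in fact \emph{affine-linear} in $z$: the quadratic terms cancel, and one is left with $(A+D')z=b$ for an explicit symmetric matrix $A$ (the adjacency matrix, possibly with a diagonal correction $D'$ depending on degrees) and explicit right-hand side $b\in\F_2^n$ depending on the case. So $X_n$ is essentially the number of solutions $z$ to a random affine system over $\F_2$, divided by $2$ to account for the symmetry $z\leftrightarrow z+\ones$ (which always preserves the solution set in the even/even and odd/odd cases, and which we must track carefully in the even/odd case, where swapping parts changes the congruence data so genuinely gives a different tuple unless...). This symmetry bookkeeping is exactly what distinguishes the three limiting distributions.

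The second step is the core probabilistic input: understanding the distribution of the solution set of $Mz=b$ where $M=A(G_{n,1/2})$ (with the appropriate diagonal) is a uniformly random symmetric $\F_2$-matrix with the relevant diagonal law, and $b$ is a fixed (or $M$-dependent) vector. The number of solutions is $0$ or $2^{n-\rank M}$, so $X_n$ takes values in $\{0\}\cup\{2^k:k\ge 0\}$ (after dividing by the symmetry factor), which already matches the shape of the claimed limit laws. We need the limiting distribution of the corank (the dimension of the kernel) of a random symmetric matrix over $\F_2$, together with the probability that $b$ lies in the column space. The relevant fact is classical (going back to work on random matrices over finite fields; see e.g.\ the corank distribution of random symmetric $\F_2$-matrices): $\Prb(\operatorname{corank}=k)\to c\prod_{i=1}^k(2^i-1)^{-1}$ for an explicit constant $c$, which is precisely the $\Prb(X=2^k)$ in the statement. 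The difference between the three cases comes from (i) whether we divide the kernel by the all-ones vector before or after, i.e.\ whether $\ones\in\ker M$ automatically, and (ii) the solvability event. For even/even, $b=0$, so the system is always solvable and $X_n=\tfrac12\cdot 2^{\operatorname{corank}}= 2^{\operatorname{corank}-1}$; one checks $\ones$ is always in the kernel (since $M\ones$ records parities of degrees, which vanish appropriately), shifting the index by one so that the limit is exactly the law of $X$. For odd/odd, $b\neq 0$ in general and solvability fails with limiting probability $\tfrac13$ — this is the probability that a fixed nonzero vector lies outside the column space of the random symmetric matrix, conditioned appropriately — producing the atom $\Prb(Z=0)=\tfrac13$; when solvable, the solution count is $2^{\operatorname{corank}}$ but now \emph{without} the all-ones vector lying in the kernel, so after dividing by the swap symmetry we get $2^{\operatorname{corank}-1}$ with the extra $2^{-k}$ factor reflecting the conditional corank distribution given solvability. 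For even/odd, the swap map sends an even/odd partition to an odd/even one, which under our ordered-tuple convention is the \emph{same} combinatorial object relabelled, so the symmetry quotient behaves as in the even/even case, giving the same limit law $X$.

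The third step is to make the convergence rigorous: show that the relevant features of $M=A(G_{n,1/2})+D'$ (its corank distribution and the solvability probability for our specific $b$) converge to those of the idealised random symmetric matrix. The subtlety is that $M$ is not a uniformly random symmetric $\F_2$-matrix — the diagonal is a deterministic function of the off-diagonal entries (via the degrees), and $b$ may depend on $M$ — so one cannot quote the classical result verbatim. The cleanest route is a moment computation: for each fixed $k$, estimate $\E\binom{Y_n}{k}$ where $Y_n$ counts (ordered) good bipartitions, show these moments converge to those of the candidate limit (which one can read off from the generating function of the claimed distribution), and invoke the method of moments for the determined limit law. Equivalently, compute $\Prb(\dim\ker M\ge k)$ for fixed $k$ directly: this is an average over $k$-dimensional subspaces $U\subseteq\F_2^n$ of $\Prb(U\subseteq\ker M)$, which for $G_{n,1/2}$ factors into a product of independent-ish conditions on the edges once one fixes a basis of $U$ and reduces to row-echelon form. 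The dominant contribution comes from subspaces $U$ containing $\ones$ (resp.\ not, in the odd/odd case), and summing the geometric-type series reproduces the constants $\prod_{i=1}^k(2^i-1)^{-1}$.

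\textbf{Main obstacle.} The hardest part is the third step: controlling $\Prb(U\subseteq\ker M)$ uniformly over all $k$-dimensional subspaces $U$ and showing that subspaces which are ``generic'' (not aligned with $\ones$ or with small coordinate supports) contribute negligibly, so that the sum over $U$ is dominated by a structured family whose contribution can be evaluated exactly. This requires an anticoncentration/rank argument for the random symmetric matrix restricted to the constraints imposed by $U$ — essentially showing that a random symmetric $\F_2$-form, evaluated on a fixed subspace, is ``full rank'' with overwhelming probability unless $U$ has special structure — together with a careful account of the diagonal dependency and the case-dependent right-hand side. Once this anticoncentration is in hand, identifying the constants $c$ and the $\tfrac13$ atom is a bookkeeping exercise with the explicit series in the statement.
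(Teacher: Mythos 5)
Your step 1 (encoding bipartitions as vectors over $\F_2$ and reducing each parity condition to an affine system $(A+D')\vv=b$) matches the paper's setup exactly, and your observation that the solution count is $0$ or a power of $2$ is the right starting point. But the proposal contains both errors of mechanism and unresolved gaps at precisely the points where the work lies. First, in the even/even case $b$ is not $0$: the system is $(A+D)\vv=D\ones$, and solvability is not automatic from the form of $b$ but needs Gallai's theorem (equivalently, that the diagonal of a symmetric $\F_2$-matrix lies in its column space). Second, in the odd/odd case you assert that $\ones$ is \emph{not} in the kernel; in fact the matrix is the same $A+D$ and $\ones\in\ker(A+D)$ always, so solutions still pair up under $\vv\mapsto\vv+\ones$ and the count is $2^{\operatorname{corank}-1}$ exactly as before. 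The extra factor $2^{-k}$ in $\Prb(Z=2^k)$ comes not from the kernel structure but from the conditional probability that the system is solvable given $\rank(A+D)=n-1-k$, which the paper computes exactly as $(2^r-1)/(2^{n-1}-1)$ via a group action of odd invertible matrices on even symmetric matrices; your proposal contains no derivation of this, and it is one of the two genuinely nontrivial inputs. The atom $\Prb(Z=0)=\tfrac13$ then follows in the paper from the generating-function identity $f(2x)=(1+x)f(x)$, not from a single ``fixed vector outside the column space'' event.

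The other nontrivial input is the corank distribution itself. You correctly flag that $A+D$ is not a uniform symmetric matrix because the diagonal is determined by the off-diagonal entries, and you propose to handle this by a moment computation or a sum over subspaces $U\subseteq\ker M$ with an anticoncentration estimate --- which you yourself identify as the main obstacle and do not carry out. The paper needs none of this: since an even symmetric matrix is determined by its off-diagonal part, $A+D$ is \emph{exactly} uniform on even symmetric matrices, and deleting the last row and column gives a rank-preserving bijection with arbitrary symmetric $(n-1)\times(n-1)$ matrices, whose rank distribution is given exactly by MacWilliams' enumeration; the limit is then read off from exact finite-$n$ formulas. Finally, your treatment of the even/odd case is also off: the paper does not quotient by the swap there (each unordered even/odd partition has a unique admissible ordering); instead the matrix becomes the odd symmetric matrix $A+D+I$, and one shows $\rank(A+D+I)=\rank(A+D)+1$ for $n$ odd (with a separate reduction for $n$ even), which is what makes that limit law coincide with the even/even one. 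As written, the proposal would not become a proof without supplying (a) the corank law for the dependent-diagonal matrix and (b) the conditional solvability probability in the odd/odd case.
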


\section{Proof of Theorem~\ref{thm:main}}\label{sec:mainproof}

For some $n,q\in \N$, consider a graph $G = ([n], E)$ and a partition $A=\{A_i\}_{i\in[q]}$ of $[n]$ into $q$ parts.
Define the \emph{subgraph induced by $A$} to be the disjoint union of the subgraphs induced by each part, that is, the spanning subgraph 
\[G[A]:= G[A_1]\coprod\dots\coprod G[A_q].\]
Equivalently, $G[A]$ can be obtained from $G$ by removing all edges for which the endvertices lie in different parts of $A$. We shall call a partition $A$ \emph{good} if the degree of every vertex in $G[A]$ is divisible by $q$. 

With $G = G_{n,1/2}$, let $X = X_n$ be the random variable representing the number of good partitions. In order to show that $X$ is asymptotically Poisson distributed, we use the method of moments. Let $\E((X)_k):=\E(X(X-1)\dots(X-k+1))$ denote the falling factorial expectation of $X$.

\begin{theorem}[see e.g.\ \cite{BBrg}, Theorem 1.22]\label{thm:poisson}
Let $(X_n)_{n\in{\N}}$ be a sequence of non-negative integer-valued random variables, and\/ $\lambda\geq 0$. If 
\[
\lim_{n \to \infty} \E((X_n)_k) = \lambda^k 
\]
for $k=0,1,\ldots$, then $X_n$ converges in distribution to $\Po(\lambda)$ as $n\to\infty$.
\end{theorem}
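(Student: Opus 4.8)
The plan is to show that $\Prb(X_n=j)\to e^{-\lambda}\lambda^j/j!$ for every fixed $j\ge 0$; since the limiting values form a bona fide probability distribution (the $\Po(\lambda)$ law), this already yields $X_n\xrightarrow{d}\Po(\lambda)$. The guiding observation is that $\Po(\lambda)$ itself has falling-factorial moments $\E((Y)_k)=\lambda^k$, so the hypothesis says precisely that $X_n$ acquires the factorial moments of $\Po(\lambda)$ in the limit; the work lies in converting this moment information into control of the individual probabilities $\Prb(X_n=j)$.

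The key deterministic ingredient I would prove first is an inversion identity valid for any non-negative integer-valued random variable $X$ and all integers $j,K\ge 0$. Writing $\E((X)_{j+k})=\sum_{m\ge j+k}\frac{m!}{(m-j-k)!}\Prb(X=m)$ and, for each $m$, collecting the finitely many contributions to $\sum_{k=0}^{K}\frac{(-1)^k}{k!}\E((X)_{j+k})$, one finds that the coefficient of $\Prb(X=m)$ equals $j!$ when $m=j$, vanishes when $j<m\le j+K$, and equals $(-1)^K\binom{m-j-1}{K}\frac{m!}{(m-j)!}$ when $m>j+K$ --- these evaluations following from the partial alternating binomial identity $\sum_{k=0}^{K}(-1)^k\binom{N}{k}=(-1)^K\binom{N-1}{K}$. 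Consequently
\[
 \sum_{k=0}^{K}\frac{(-1)^k}{k!}\,\E\big((X)_{j+k}\big)=j!\,\Prb(X=j)+(-1)^K R_{j,K}(X),\qquad R_{j,K}(X)\ge 0,
\]
so that the even partial sums ($K=2r$) lie above $j!\,\Prb(X=j)$ and the odd partial sums ($K=2r+1$) lie below it. These Bonferroni-type inequalities bracket $j!\,\Prb(X=j)$ without ever requiring one to sum an alternating series for a fixed distribution.

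Armed with this, I would apply the inequalities with $X=X_n$, take $\liminf_n$ and $\limsup_n$, and substitute $\E((X_n)_{j+k})\to\lambda^{j+k}$ to obtain, for each fixed $r$,
\[
 \sum_{k=0}^{2r+1}\frac{(-1)^k}{k!}\lambda^{j+k}\ \le\ \liminf_n j!\,\Prb(X_n=j)\ \le\ \limsup_n j!\,\Prb(X_n=j)\ \le\ \sum_{k=0}^{2r}\frac{(-1)^k}{k!}\lambda^{j+k}.
\]
Letting $r\to\infty$, the absolutely convergent series $\sum_k\frac{(-1)^k}{k!}\lambda^{j+k}=\lambda^je^{-\lambda}$ squeezes both outer expressions, forcing $\Prb(X_n=j)\to e^{-\lambda}\lambda^j/j!$; the degenerate case $\lambda=0$ needs only $\E(X_n)=\E((X_n)_1)\to 0$. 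I expect the main obstacle to be exactly this interchange of limits: one cannot legitimately invert the factorial moments of a single $X_n$ directly --- convergence of factorial moments does not by itself pin down a general limit law --- and it is the two-sided Bonferroni bracketing that licenses taking the limit in $n$ before the limit in $K$. A softer route that sidesteps the explicit identity is to note that boundedness of $\E((X_n)_1)$ and $\E((X_n)_2)$ makes $(X_n)$ tight with uniformly bounded moments of every order, extract a subsequential distributional limit $Y$, pass moments through the limit to get $\E((Y)_k)=\lambda^k$ for all $k$, and then appeal to the moment-determinacy of the Poisson law (via Carleman's condition) to identify $Y\sim\Po(\lambda)$; as every subsequential limit is $\Po(\lambda)$, the whole sequence converges to $\Po(\lambda)$.
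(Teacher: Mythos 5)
Your argument is correct: the Bonferroni-type bracketing of $j!\,\Prb(X_n=j)$ between odd and even partial sums of $\sum_k \frac{(-1)^k}{k!}\E((X_n)_{j+k})$, followed by letting $n\to\infty$ before $K\to\infty$, is exactly the standard proof of the factorial-moment method, and it is essentially the argument in the reference the paper cites (the paper itself states Theorem~\ref{thm:poisson} without proof). Both your main route and the alternative via tightness and moment-determinacy of the Poisson law are sound, so there is nothing to correct.
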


In view of Theorem~\ref{thm:poisson}, the proof of Theorem~\ref{thm:main} entails counting the expected number of ordered $k$-tuples
$\A = (A^{(1)},\dots,A^{(k)})$ of \emph{distinct\/} partitions $A^{(j)}$ such that each $A^{(j)}$ is good. We use discrete Fourier analysis to write a deterministic expression for the expectation, which we then compute via combinatorial and algebraic means. It turns out that this count mostly comprises $k$-tuples which intersect `generically', or are `independent' in some sense.
To describe this situation, we introduce some notation that will be used throughout this section as well as Section~\ref{sec:proof2}.

For a particular $\A$ let the \emph{coordinates} of a vertex $v\in[n]$ be the $k$-tuple $\cc(v)=(c_1,\dots,c_k)$ such that $v\in (A^{(j)})_{c_j}$ for all $j$. Then for vertices $u,v\in [n]$, let $I_{u,v}:= \{j\in [k]: c(u)_j=c(v)_j\}$. That is, $I_{u,v}$ corresponds to the set of partitions in our $k$-tuple in which $u$ and $v$ lie in the same part. Given $\cc,\cc'\in [q]^k$, we will similarly let $I_{\cc,\cc'}=\{j\in[k]:c_j=c'_j\}$. It is quite possible for many vertices to share the same coordinates. Indeed, given $\cc\in[q]^k$ we define the \emph{box} $V_\cc=\{v\in[n]:\cc(v)=\cc\}$ to be the set of vertices with coordinates~$\cc$. Equivalently,
\[V_\cc = \bigcap_{j\in [k]} (A^{(j)})_{c_j}.\] 
We will show that the $k$-tuples of partitions for which all of the associated boxes are reasonably large (and hence the partitions will be essentially independent) contribute $q!^{-k}+o(1)$ to the expectation $\E((X)_k)$ when $q$ is odd and $2^q q!^{-k}+o(1)$ when $q$ is even, whilst the remaining configurations contribute $o(1)$ as $n\to\infty$. Since we are just concerned with asymptotics, all statements made throughout should be interpreted with the implicit assumption that $n$ is sufficiently large.

To start the proof, let $q \geq 3$ and fix a $k$-tuple of partitions of $[n]$, say $\A = (A^{(1)},A^{(2)},\dots,A^{(k)})$. Our goal is to determine the probability over choice of $G_{n,1/2}$ that all partitions $A^{(j)}$ in this tuple are good. Let $\zeta_q=e^{2\pi i/q}$ ($i=\sqrt{-1}$ in this instance only) and $\mu_q=\{1,\zeta_q,\dots,\zeta_q^{q-1}\}$ be the $q$th roots of unity. We denote by $\zzeta=(\zeta_{v,j})$ an assignment of roots of unity to vertices for each partition, consisting of a root $\zeta_{v,j}\in \mu_q$ corresponding to each vertex $v$ and $j\in[k]$. Given some $\zzeta$, we denote the $k$-tuple of roots for a fixed vertex $v$ by $\zzeta_v = (\zeta_{v,1}, \ldots, \zeta_{v,k})$. Let $R$ be the set of all possible assignments $\zzeta$, so that $|R| = q^{kn}$. 
For each $d\in \Z$ we have
\begin{equation}\label{eq:zetacases}
    \frac{1}{q}\sum_{\zeta \in \mu_q} \zeta^d = \begin{cases}
    1, & \text{if } q\mid d; \\
    0, & \text{otherwise.} \\
  \end{cases}
\end{equation}
Thinking of $d$ as the degree of a vertex $v$ in the spanning subgraph $G_j:= G[A^{(j)}]$ induced by the partition $A^{(j)}$, the assigned roots will be used to determine whether $v$ satisfies the required degree condition. Writing $\id\{\ldots\}$ for the indicator function of an event and $\E$ for the expectation over choices of the random graph $G_{n,1/2}$, it follows from \eqref{eq:zetacases} that
\begin{align*}
\Prb(\text{all $A^{(j)}$ are good})
&=\E \;\mathbbm{1}\{ \text{$d_{G_j}(v) \equiv 0 \bmod q$ for all $v$ and $j$}\}\\
&=\E \prod_{j \in [k]} \prod_{v \in [n]}\frac{1}{q}\sum_{\zeta \in \mu_q}\zeta^{d_{G_j}(v)}\\
&= \E \frac{1}{q^{kn}} \sum_{\zzeta \in R} \prod_{j \in [k]} \prod_{v \in [n]} \zeta_{v,j}^{d_{G_j}(v)}\\
&= \E \frac{1}{q^{kn}} \sum_{\zzeta\in R} \prod_{j \in [k]} \prod_{vw \in E(G_j)} \zeta_{v,j} \zeta_{w,j}\\
&= \frac{1}{q^{kn}} \sum_{\zzeta\in R} \E \prod_{vw \in E(G)} \prod_{j \in I_{v,w}} \zeta_{v,j} \zeta_{w,j}\\
&= \frac{1}{q^{kn}} \sum_{\zzeta\in R\,} \prod_{\,\{v,w\}} \frac{1}{2}\Big(1+ \prod_{j \in I_{v,w}} \zeta_{v,j} \zeta_{w,j}\Big)
\end{align*}
where the last equality holds because $G=G_{n,1/2}$, and the outer product is taken over all 2-element subsets of $[n]$. We can then write
\begin{equation}\label{eq:mainexp}
    \E((X)_k)=\frac{1}{q^{kn}} \sum_{\A} \sum_{\zzeta\in R\,} \prod_{\,\{v,w\}}\frac{1}{2}\Big(1+\prod_{j\in I_{v,w}}\zeta_{v,j}\zeta_{w,j}\Big)  
\end{equation}
with the first sum taken over all $k$-tuples of (not necessarily good) distinct partitions. The expression in (\ref{eq:mainexp}) is deterministic in the sense that it no longer involves the random graph, and we are instead left to work with \emph{configurations}, which are choices $(\A, \zzeta)$ of a $k$-tuple of partitions and assignment of roots. To dismiss the possibility that the contributions from different configurations, which in general are complex numbers, may cancel each other out, we will work with the expression
\begin{equation}\label{eq:mainexp_mod}
    \frac{1}{q^{kn}} \sum_{\A} \sum_{\zzeta\in R\,} \prod_{\,\{v,w\}} \Big| \frac{1}{2}\Big(1+\prod_{j\in I_{v,w}}\zeta_{v,j}\zeta_{w,j} \Big) \Big|,
\end{equation}
where we take the modulus of the individual contributions.
For the computation, we will group together configurations that share certain characteristics and bound and compare the contributions of those groups to \eqref{eq:mainexp} and \eqref{eq:mainexp_mod}.
For a given configuration, observe that the contribution is small unless almost all pairs $v,w$ satisfy $\prod_{j\in I_{v,w}}\zeta_{v,j}\zeta_{w,j}=1$. Specifically, say that distinct vertices $v$ and $w$ are in \emph{conflict\/} if $\prod_{j\in I_{v,w}}\zeta_{v,j}\zeta_{w,j}\ne1$. 
For every conflicted pair,
\begin{equation}\label{eq:conflictbound}
 \Big|\frac{1}{2}\Big(1+\prod_{j\in I_{v,w}}\zeta_{v,j}\zeta_{w,j}\Big)\Big|\le \frac{1}{2}\big|1+\zeta_q\big|=\cos(\pi/q)\le e^{-1/q^2}.
\end{equation}
We will in fact see that the total contribution of all configurations with conflicts is $o(1)$. This will show that \eqref{eq:mainexp} and \eqref{eq:mainexp_mod} converge to the same limit, so that we may consider contributions to \eqref{eq:mainexp_mod} rather than directly to $\E((X)_k)$.

Fix $K>kq^2\log q$. Since there are at most $q^{kn}$ choices of $k$-tuples $\A$ and the same number of possible assignments $\zzeta$, the total contribution to \eqref{eq:mainexp_mod} from configurations with more than $Kn$ conflicted pairs is at most
\begin{equation}\label{eq:tooconflicted}
q^{-kn}\cdot q^{2kn} \cdot \big(e^{-1/q^2}\big)^{Kn} = e^{-(K-kq^2\log q)n/q^2} = o(1)
\end{equation}
as $n\to\infty$. 

Now fix $C>2q^2$, and call a vertex \emph{bad\/} if it is involved in more than $C\log n$ conflicted pairs. A vertex that is not bad is \emph{good}.
From (\ref{eq:tooconflicted}), we may assume that there are at most $Kn$ conflicted pairs and hence, allowing for the possibility that bad vertices may be in conflict with each other, at most $2Kn/(C\log n)=o(n)$ bad vertices.

Since there are $q^k$ boxes and $q^k$ choices of $\zzeta_v$ for each vertex $v$, there
must be a particular box $V^\star = V_\cc$ and vertex $v^\star\in V^\star$ such that $\zzeta_v = \zzeta_{v^\star}$ for at least $q^{-2k}n$ vertices $v \in V^\star$. Such a vertex (in $V^\star$ and assigned the same $k$-tuple of roots as $v^\star$) will be called a \emph{most common} vertex. 

\begin{lemma}\label{lemma:mcprops}
Let $(\A,\zzeta)$ be a configuration with $o(n)$ bad vertices. Then all most common vertices are good. Moreover, each most common vertex is not conflicted with any other good vertices, including other most common vertices.
\end{lemma}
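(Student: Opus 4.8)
The plan is to exploit the rigidity of most common vertices: they all lie in the same box $V^\star=V_\cc$ and carry the same root vector $\zzeta_{v^\star}$, so whether a given vertex is in conflict with one of them cannot depend on anything that distinguishes them. First I would record that for any two most common vertices $v,v'$ we have $I_{v,v'}=[k]$ and $\prod_{j\in I_{v,v'}}\zeta_{v,j}\zeta_{v',j}=\prod_{j=1}^k\zeta_{v^\star,j}^2$, a quantity depending only on $\zzeta_{v^\star}$. If this product were not $1$, then \emph{every} pair of most common vertices would be in conflict, so each of the at least $q^{-2k}n$ most common vertices would lie in at least $q^{-2k}n-1>C\log n$ conflicted pairs (for $n$ large) and hence be bad; this produces $\Omega(n)$ bad vertices, contradicting the hypothesis. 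So $\prod_{j=1}^k\zeta_{v^\star,j}^2=1$, and in particular no two most common vertices are in conflict.

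The crux is a propagation observation. Suppose a vertex $w$ is in conflict with some most common vertex $v$. By the previous paragraph $w$ is not itself most common, and for any most common vertex $v'$ we have $I_{v',w}=\{j:c_j=\cc(w)_j\}=I_{v,w}$ while $\zeta_{v',j}=\zeta_{v^\star,j}=\zeta_{v,j}$ for all $j$; hence $\prod_{j\in I_{v',w}}\zeta_{v',j}\zeta_{w,j}=\prod_{j\in I_{v,w}}\zeta_{v,j}\zeta_{w,j}\ne1$, so $w$ is in conflict with \emph{every} most common vertex. Thus such a $w$ lies in at least $q^{-2k}n>C\log n$ conflicted pairs and is bad. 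Letting $W$ denote the set of all vertices in conflict with at least one most common vertex and $M$ the set of most common vertices, we have $W\cap M=\varnothing$ and every pair $\{w,v'\}$ with $w\in W$, $v'\in M$ is conflicted, so $|W|\,q^{-2k}n\le|W|\,|M|\le(\text{number of conflicted pairs})\le Kn$, since we may assume there are at most $Kn$ conflicted pairs, cf.\ \eqref{eq:tooconflicted}. Therefore $|W|\le Kq^{2k}$, a constant.

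It then follows immediately that each most common vertex is in conflict only with vertices of $W$, hence with at most $Kq^{2k}<C\log n$ of them for $n$ large, so it is good; and since $W$ consists entirely of bad vertices, no most common vertex is in conflict with any good vertex, in particular not with another most common vertex. The one genuinely non-routine ingredient is this ``all or nothing'' propagation of conflicts onto the large family $M$, together with the double-counting bound it yields on $|W|$; everything else is bookkeeping with the constants $C$ and $K$.
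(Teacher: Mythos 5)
Your central observation---that all most common vertices share the same coordinates and the same root vector $\zzeta_{v^\star}$, so a conflict with one of them propagates to all $\ge q^{-2k}n$ of them---is exactly the engine of the paper's (two-sentence) proof, and your first two paragraphs correctly establish that no two most common vertices conflict and that any vertex conflicting with a most common vertex is bad, which gives the second assertion. The one place you diverge is in proving that most common vertices are \emph{good}: you bound $|W|\le Kq^{2k}$ by double-counting conflicted pairs against the bound of at most $Kn$ conflicted pairs from \eqref{eq:tooconflicted}. That bound is a standing assumption in the surrounding discussion, but it is \emph{not} a hypothesis of the lemma, whose only assumption is that there are $o(n)$ bad vertices; as written your argument therefore proves a formally weaker statement (correct in every context where the lemma is applied, but not the lemma as stated). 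The repair is immediate and is what the paper does: since there are at least $q^{-2k}n$ most common vertices and only $o(n)$ bad ones, some most common vertex is good, and since all most common vertices have identical conflict sets (you have already shown none of them conflict with each other), all of them are good. With that substitution your proof coincides with the paper's; the double-counting detour buys nothing here.
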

\begin{proof}
It is enough to observe that since all most common vertices $v$ have the same coordinates and $\zzeta_v$, they must all be in conflict with the same set of vertices. As there are only $o(n)$ bad vertices, some most common vertices are good, meaning they are in conflict with at most $C\log n$ vertices. Similarly, for the second statement, if any good vertex is in conflict with a most common vertex then it must be in conflict with all most common vertices, but this exceeds the allowed number $C\log n$ of conflicts for a good vertex.
\end{proof}

We will show that bad vertices may be replaced by most common vertices at a small cost in contribution to \eqref{eq:mainexp_mod}. This will allow us to assume that there are no bad vertices in the remainder of the argument.
\begin{lemma}\label{lemma:removebad}
The total contribution to \eqref{eq:mainexp_mod} from configurations with bad vertices is $o(1)$ times the contribution from configurations where all vertices are good. 
\end{lemma}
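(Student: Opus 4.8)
The plan is to exhibit a map that takes each configuration $(\A,\zzeta)$ with at least one bad vertex to a configuration with strictly fewer bad vertices, by surgically moving a single bad vertex into the most common box $V^\star$ and re-assigning its roots to agree with $v^\star$; I would then show that this map does not decrease the product in \eqref{eq:mainexp_mod} by more than a bounded factor, while each fibre of the map has size $o(n)$ (or, better, is controlled so that the total loss telescopes to $o(1)$). Concretely, fix a configuration with a bad vertex and let $u$ be, say, the bad vertex of smallest index. Define a new configuration $(\A',\zzeta')$ by setting $\cc'(u)=\cc$ (the coordinates of $V^\star$) and $\zzeta'_u=\zzeta_{v^\star}$, leaving every other vertex's coordinates and roots unchanged. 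By Lemma~\ref{lemma:mcprops}, in $(\A',\zzeta')$ the vertex $u$ is now a most common vertex (or at least shares coordinates and roots with the $\ge q^{-2k}n$ most common vertices, which remain good since $o(n)$ is unaffected by removing one bad vertex), so $u$ is good in $(\A',\zzeta')$; no previously good vertex can have become bad, since the only pairs whose conflict status changed are those involving $u$. Hence the number of bad vertices strictly decreases.

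The key quantitative step is to compare $\prod_{\{v,w\}}|\tfrac12(1+\prod_{j\in I_{v,w}}\zeta_{v,j}\zeta_{w,j})|$ for the two configurations. Only the $n-1$ factors indexed by pairs $\{u,w\}$ change. For each such pair the factor in $(\A',\zzeta')$ is at least $\cos(\pi/q)$ if $u$ is now in conflict with $w$, and equals $1$ otherwise; by Lemma~\ref{lemma:mcprops} applied to $(\A',\zzeta')$, $u$ is in conflict only with bad vertices, of which there are $o(n)$, so at most $o(n)$ factors decrease and each by a factor of at least $\cos(\pi/q)=e^{-O(1/q^2)}$. Thus the contribution of $(\A',\zzeta')$ is at least $e^{-o(n)}$ times that of $(\A,\zzeta)$ — but this crude bound is not good enough on its own, which is where the main work lies.

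The main obstacle is controlling the fibres: many configurations can map to the same $(\A',\zzeta')$, since we have thrown away the old coordinates and roots of $u$, a loss of a factor up to $q^{2k}$ in the number of preimages, and more seriously we have lost the information of which vertex $u$ was. To handle this I would iterate the move, at each step peeling off one bad vertex, and track the contributions along the whole orbit: starting from a configuration with $b=o(n)$ bad vertices, after $b$ steps we reach a configuration with no bad vertices, and the product of the per-step multiplicative losses is $e^{-o(n)\cdot o(1)}$-type — more carefully, the number of conflicted pairs is at most $Kn$ throughout, so the total number of factors that are ever decreased across all $b$ steps is $O(Kn)$, giving a total multiplicative loss of at most $(\cos(\pi/q))^{-O(Kn)}=e^{O(Kn/q^2)}$, which must be beaten by a counting saving. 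The counting saving comes from the fact that a configuration with all vertices good has at most $Kn$ conflicted pairs and hence its "defect" is concentrated on $o(n)$ vertices; reversing the map, to reconstruct a preimage with $b$ bad vertices we must choose the $b$ vertices to relocate (at most $\binom{n}{b}$ ways) and their old coordinates and roots ($q^{2kb}$ ways), but $b=o(n)$ forces $\binom{n}{b}q^{2kb}=e^{o(n)}$ only if $b$ is genuinely sublinear with enough room — so I would instead partition configurations by their number $b$ of bad vertices and sum the geometric-type series $\sum_b (\text{number with } b \text{ bad})\cdot(\text{loss}) \le \sum_b e^{o(n)}\cdot(\text{good contribution})$, absorbing everything into the $o(1)$ from \eqref{eq:tooconflicted} which already discards all configurations with $>Kn$ conflicted pairs. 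Making this bookkeeping airtight — choosing the right order of operations so that the per-step loss and the per-step fibre size multiply to something summable — is the crux; everything else is the routine estimate $\cos(\pi/q)\le e^{-1/q^2}$ already recorded in \eqref{eq:conflictbound}.
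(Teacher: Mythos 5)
Your overall shape (relocate bad vertices into $V^\star$, bound the fibres of the map, compare contributions) matches the paper's, but the proposal is missing the one quantitative fact that closes the argument, and you say so yourself ("making this bookkeeping airtight \dots is the crux"). The point is that a bad vertex, by definition, participates in more than $C\log n$ conflicted pairs, and each such pair already contributes a factor at most $e^{-1/q^2}$ to \eqref{eq:mainexp_mod}. Hence a configuration with $t$ bad vertices has contribution at most $e^{-t(C\log n)/(2q^2)}=n^{-tC/(2q^2)}$ times the contribution of the conflict-free configuration it is mapped to (the $1/2$ accounting for conflicts counted from both endpoints). This \emph{polynomial-in-$n$ gain per bad vertex} is what beats the per-vertex fibre size $\binom{n}{t}(q^k)^t(q^k)^t\le (nq^{2k})^t$, precisely because $C>2q^2$ makes $n^{1-C/(2q^2)}q^{2k}\to 0$; summing the resulting geometric series over $t\ge 1$ gives $o(1)$. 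Your accounting runs in the opposite direction: you treat the conflicts as a "loss of $e^{O(Kn/q^2)}$" to be recovered by a "counting saving", and then try to compare two $e^{o(n)}$-sized quantities, which cannot resolve. The conflicts of the bad vertices are the \emph{source} of the saving, not a cost, and the relevant comparison is per bad vertex, not global.

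There is also a genuine defect in the one-vertex-at-a-time iteration. After you move $u$ into $V^\star$, it may acquire new conflicts with the remaining bad vertices, and for such a pair the factor $\bigl|\tfrac12(1+\prod_{j\in I_{u,w}}\zeta_{u,j}\zeta_{w,j})\bigr|$ is at \emph{most} $\cos(\pi/q)$ --- not at least, as you assert; inequality \eqref{eq:conflictbound} is an upper bound --- and it can equal $0$ (e.g.\ when the product of roots is $-1$ for $q$ even). An intermediate configuration in your chain can therefore have contribution zero while its predecessor does not, so the telescoped comparison old $\le(\cdots)\times$ new breaks down. The paper sidesteps this entirely by replacing \emph{all} bad vertices simultaneously with copies of a most common vertex: by Lemma~\ref{lemma:mcprops} the new copies then conflict with nothing (every remaining vertex is good), so all changed factors in the image configuration equal $1$ and only the upper bound on the original configuration's factors is needed.
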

\begin{proof}
For a $k$-tuple of partitions $\A$ and assignment $\zzeta$, let $t$ be the number of bad vertices. We modify the configuration by replacing all the bad vertices
by duplicates of the most common vertices. That is
remove all $t$ of them and add $t$ vertices to the box $V^\star$, each assigned~$\boldsymbol{\zeta^\star}$. Note that this produces a configuration in which all vertices are good. Indeed, removing vertices cannot increase the number of conflicts, and the most common vertices are not conflicted with any good vertex by Lemma~\ref{lemma:mcprops}.

The preceding construction sends at most $\binom{n}{t}(q^k)^t(q^k)^t\le (nq^{2k})^t$ configurations that have $t$ bad vertices to a single configuration without bad vertices. To see this, reversing the process allows at most $\binom{n}{t}$ choices for the bad vertices, $q^k$ choices each for which box they were in and $q^k$ choices each for their
original $\zeta$ values. On the other hand, we have removed at least $t(C\log n)/2$ conflicts as after this transformation the `new' most common vertices do not participate in any conflicts by Lemma~\ref{lemma:mcprops}. Using the bound from (\ref{eq:conflictbound}) and the choice of $C>2q^2$, the sum of contributions to \eqref{eq:mainexp_mod} from all configurations with bad vertices that are transformed into a particular configuration with all vertices good divided by the contribution from that particular configuration is a factor of at most
\[
 \sum_{t>0}(nq^{2k})^te^{-(t(C\log n)/2)/q^2}
 = \sum_{t>0}\big(n^{1-C/2q^2}q^{2k}\big)^t
 =o(1)
\]
as $n\to\infty$. This is true for all configurations without bad vertices, so the lemma follows.
\end{proof}

Henceforth, we assume that there are no bad vertices.
This has some useful consequences for boxes $V_\cc$ with $|V_\cc| > 2C\log n+2$. We call such boxes \emph{large}, and the remaining non-empty boxes \emph{small}. Given any vertex $v\in [n]$ and $I\subseteq [k]$ define $\zeta_{v,I}:=\prod_{j\in I}\zeta_{v,j}$.

\begin{lemma}\label{lemma:nolargeconflict}
Let $(\A,\zzeta)$ be a configuration without bad vertices. Take any pair of boxes $V_\cc$ and\/ $V_{\cc'}$
\textup{(}possibly the same box\textup{)},
with the former being large, and let $I=I_{\cc,\cc'}$. Then we have $\zeta_{u,I}$ = $\zeta_{v,I}$ for all\/ vertices $u,v\in V_{\cc'}$.
If\/ $V_\cc$ and\/ $V_{\cc'}$ are both large boxes, then $\zeta_{v,I} = \zeta_{w, I}^{-1}$ for all\/ $v\in V_{\cc'}$ and\/ $w\in V_{\cc}$. In particular, there are no conflicted pairs within or between large boxes.
\end{lemma}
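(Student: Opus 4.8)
The plan is to exploit the fact that large boxes contain many vertices, all of which share the same coordinates, so any "bad behaviour" within or between large boxes would force too many conflicts onto individual vertices and contradict the no-bad-vertex hypothesis. First I would prove the first assertion: fix a large box $V_\cc$ and any box $V_{\cc'}$, set $I = I_{\cc,\cc'}$, and suppose for contradiction that there are $u,v \in V_{\cc'}$ with $\zeta_{u,I} \neq \zeta_{v,I}$. Take any $w \in V_\cc$. For the pair $\{u,w\}$ we have $I_{u,w} = I_{\cc,\cc'} = I$ (since $u$ has coordinates $\cc'$ and $w$ has coordinates $\cc$), so the conflict product for $\{u,w\}$ is $\zeta_{u,I}\,\zeta_{w,I}$, and likewise for $\{v,w\}$ it is $\zeta_{v,I}\,\zeta_{w,I}$. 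Since $\zeta_{u,I} \neq \zeta_{v,I}$, these two products cannot both equal $1$, so $w$ is in conflict with at least one of $u,v$. As this holds for every one of the $|V_\cc| > 2C\log n + 2$ vertices $w \in V_\cc$, at least one of $u,v$ is in conflict with at least $\tfrac12(2C\log n + 2) > C\log n$ vertices of $V_\cc$, making it bad — a contradiction. (One minor point to handle: if $V_{\cc'} = V_\cc$, then $u,v,w$ range over the same set and one should take $w \notin \{u,v\}$, which is fine since $|V_\cc|$ is large; the count of conflicts forced on $u$ or $v$ changes only by an additive constant.)

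Next I would prove the second assertion. Now both $V_\cc$ and $V_{\cc'}$ are large. By the first part applied with the roles as stated, $\zeta_{v,I}$ takes a single common value, say $\alpha$, over all $v \in V_{\cc'}$, and by the first part with the roles of $\cc$ and $\cc'$ swapped (noting $I_{\cc',\cc} = I_{\cc,\cc'} = I$), $\zeta_{w,I}$ takes a single common value, say $\beta$, over all $w \in V_\cc$. Pick any $v \in V_{\cc'}$ and $w \in V_\cc$; the pair $\{v,w\}$ has $I_{v,w} = I$, so its conflict product is $\zeta_{v,I}\zeta_{w,I} = \alpha\beta$. If $\alpha\beta \neq 1$ then every such pair $\{v,w\}$ is conflicted, so each $v \in V_{\cc'}$ is in conflict with all $|V_\cc| > C\log n$ vertices of $V_\cc$, again contradicting that $v$ is good. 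Hence $\alpha\beta = 1$, i.e. $\zeta_{v,I} = \zeta_{w,I}^{-1}$ for all $v \in V_{\cc'}$, $w \in V_\cc$, which is the claim.

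Finally, the "in particular" statement follows immediately: for a pair $\{v,w\}$ with $v \in V_{\cc'}$, $w \in V_\cc$ and both boxes large (including the case $\cc = \cc'$, where $I = I_{v,w} = [k]$ and the relation gives $\zeta_{v,[k]}\zeta_{w,[k]} = \zeta_{w,[k]}^{-1}\zeta_{w,[k]} = 1$), we have just shown the conflict product $\prod_{j\in I_{v,w}}\zeta_{v,j}\zeta_{w,j} = \zeta_{v,I}\zeta_{w,I} = 1$, so $\{v,w\}$ is not conflicted. I do not expect any serious obstacle here; the only things to be careful about are the bookkeeping when the two boxes coincide (ensuring the chosen vertices are distinct and the conflict count still exceeds $C\log n$, which is why the threshold for "large" was taken to be $2C\log n + 2$ rather than just $C\log n$) and correctly identifying $I_{u,w}$ with $I_{\cc,\cc'}$ for vertices in the respective boxes — both of which are routine.
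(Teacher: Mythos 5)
Your proposal is correct and uses essentially the same argument as the paper: a counting/pigeonhole step exploiting that a good vertex can conflict with at most $C\log n$ of the more than $2C\log n+2$ vertices of a large box (the paper phrases this as each $\zeta_{v,I}$ agreeing with $\zeta_{w,I}^{-1}$ for more than half of the $w\in V_\cc$, forcing a common value; yours is the contrapositive of the same count). Your handling of the $\cc=\cc'$ case and the derivation of the second and final statements also match the paper's.
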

\begin{proof}
If $v\in V_{\cc'}$ and $w\in V_{\cc}$ then $v$ and $w$ are in conflict iff $\zeta_{v,I}\ne\zeta_{w,I}^{-1}$, where $I=I_{\cc,\cc'}$.
Since the $v\in V_{\cc'}$ are good we deduce that the $\zeta_{v,I}$ must all be equal to more than $|V_{\cc}|-1-C\log n>|V_{\cc}|/2$
values of $\zeta_{w,I}^{-1}$.
(The $-1$ is due to the fact that $v$ lies in $V_{\cc}$ when
$\cc=\cc'$.)
This proves the first statement. Assuming that $V_\cc$ and $V_{\cc'}$ are both large, applying the first result twice shows that there is a common $\zeta_{v,I}$ for all $v\in V_{\cc'}$ and also a common $\zeta_{w,I}$ for all $w\in V_{\cc}$, and moreover that $\zeta_{v,I} = \zeta_{w, I}^{-1}$ must be true for all $v\in V_{\cc'}$, $w\in V_{\cc}$. The final statement follows immediately, noting that we may take $\cc = \cc'$.
\end{proof}

In light of the first statement in Lemma~\ref{lemma:nolargeconflict}, one can define a common value $\zeta_{\cc',I}$ equal to $\zeta_{v,I}$ for all $v\in V_{\cc'}$ provided there is some large $V_{\cc}$ with $I=I_{\cc,\cc'}$.

We now begin to evaluate (\ref{eq:mainexp_mod}) by grouping together $k$-tuples in the first sum depending on how their component partitions intersect. For a $k$-tuple of partitions $\A$, define $L = L_\A := \{\cc \in [q]^k: V_\cc\text{ is large}\}$. 
Suppose that $L = [q]^k$, meaning all boxes are large. In this case we note that for each $\cc$, any $j\in [k]$ and any $v\in V_\cc$, we can find vertices $u, w$ that satisfy 
\[I_{v,u}=I_{v,w}=I_{u,w}=\{j\}.\]
To see this, take for instance $u\in V_{\cc'}$ and $w\in V_{\cc''}$ where $c''_j=c'_j=c_j$, but $c''_i,c'_i,c_i$ are distinct for all $i\ne j$. Such coordinates exist since $q\geq 3$. 
Now by Lemma~\ref{lemma:nolargeconflict} there are no conflicted vertices between large boxes, so $I_{v,u} = \{j\}$ implies that $\zeta_{v,j}=\zeta_{u,j}^{-1}$ and similarly for the other two pairs of vertices. This gives $\zeta_{v,j}=\zeta_{u,j}^{-1}=\zeta_{w,j}=\zeta_{v,j}^{-1}$, so $\zeta_{v,j}\in \{\pm1\}$. Moreover, for any $u$ with $\cc(u)_j=\cc(v)_j$ we can find a $w$ with $I_{v,w}=I_{w,u}=\{j\}$ by picking values to ensure that each $\cc(w)_i\ne \cc(u)_i,\cc(v)_i$, $i\ne j$, and $\cc(w)_j=\cc(u)_j=\cc(v)_j$. Then
$\zeta_{v,j}=\zeta_{w,j}^{-1}=\zeta_{u,j}$. Hence $\zeta_{v,j}$ depends only on the value of $\cc(v)_j$.
Thus, we can write $\zeta_{v,j}=\zeta_{\cc(v)_j,j}$ for some choice of $\zeta_{i,j}\in\{\pm1\}$ with $i\in[q]$ and $j\in[k]$.
Conversely any such choice gives rise to no conflicts. Indeed, for $I=I_{v,w}$ we have
\[
 \zeta_{v,I}\zeta_{w,I}=\prod_{j\in I}\zeta_{\cc(v)_j,j}\zeta_{\cc(w)_j,j}=\prod_{j\in I}\zeta_{\cc(v)_j,j}^2=1.
\]
Thus there are precisely $2^{kq}$ choices of $\zeta$ values giving rise to no bad vertices when $q$ is even, and only one (all $\zeta_{v,j}=1$) when $q$ is odd as then $-1\notin\mu_q$.

Now there are $q^n$ \emph{ordered\/} partitions
(allowing empty parts). An unordered partition without empty parts corresponds to exactly $q!$ ordered partitions
and so, as there are only $O((q-1)^n)$ partitions
with empty parts, we have $(1+o(1))q^n/q!$ unordered
partitions, whether or not we allow empty parts.

Also, it is easy to see that only $o(q^{kn})$ $k$-tuples of partitions have $L \neq [q]^k$. Indeed, we may choose a $k$-tuple of \emph{not necessarily distinct} ordered partitions uniformly at random by independently including each vertex in any box with probability $q^{-k}$. The probability that a fixed box is not large is $o(1)$. The assertion then follows by taking a union bound over all boxes and noting that the number of not necessarily distinct ordered partitions gives an upper bound on the number of tuples of distinct unordered partitions.

The number of $k$-tuples with $L = [q]^k$ is then $(q^n/q!)_k (1+o(1))$ and hence the total contribution to \eqref{eq:mainexp_mod} from these $k$-tuples is
\begin{equation}\label{eq:fullcontribodd}
q^{-kn} \cdot (q^n/q!)_k (1+o(1)) = (1/q!)^k + o(1)
\end{equation}
for $q$ odd and
\begin{equation}\label{eq:fullcontribeven}
q^{-kn}\cdot 2^{kq} \cdot(q^n/q!)_{k}(1+o(1))=(2^q/q!)^k+o(1)
\end{equation}
when $q$ is even.

A special family of subsets of $[q]^k$ are the \emph{combinatorial subspaces}, which are defined to be sets of the form
\begin{equation}\label{e:comb}
 \big\{(\phi_1(x_{i_1}),\dots,\phi_k(x_{i_k}))\in[q]^k: x_1,\dots,x_r\in[q]\big\}
\end{equation}
where $\phi_1,\dots,\phi_k$ are permutations of $[q]$ and $i_1,\dots,i_k\in[r]$.
In other words, up to permutations, each coordinate follows one of the variables~$x_i$, but different coordinates may follow the same variable. Equivalently, it is a non-empty set that can be expressed as the intersection of some number of constraints of the form $c_i=\phi_{ij}(c_j)$ where the $\phi_{ij}$ are permutations of~$[q]$.

Combinatorial subspaces capture the situation where two partitions in the tuple are the same, accounting for relabelling of parts. That is, if $L \neq [q]^k$ but forms a combinatorial subspace, and there are no small boxes, then two of the partitions in $\A$ must be the same. Indeed, in the notation of \eqref{e:comb}, under these assumptions there must exist $j,j' \in [k]$ such that $i_j=i_{j'}$. Then for $\phi=\phi_{j'} \circ \phi_j^{-1}$ we have $A^{(j)}_i=A^{(j')}_{\phi(i)}$ for all $i\in [q]$. To see this, letting $v\in A^{(j)}_i$ we get $v\in V_c$ with $c\in L$ as there are no small boxes. It follows that $c_{j'}=\phi(c_j)=\phi(i)$, i.e.\ $v \in A^{(j')}_{\phi(i)}$ as desired.

However, such $k$-tuples where two partitions are the same do not occur in \eqref{eq:mainexp}. The $k$-tuples yet to be considered therefore fall into one of two classes: those for which $L$ is not a combinatorial subspace and there are no small boxes, and those for which there are small boxes. It suffices to prove that the contribution to \eqref{eq:mainexp_mod} is $o(1)$ in both cases. Then, configurations with conflicts contribute $o(1)$ in total to \eqref{eq:mainexp_mod}, and \eqref{eq:mainexp} and \eqref{eq:mainexp_mod} converge to the same limit. This, together with \eqref{eq:fullcontribodd} and \eqref{eq:fullcontribeven}, would allow us to conclude that $\E((X)_k) \to (1/q!)^k$ for $q\geq 3$ odd and $\E((X)_k) \to(2^q/q!)^k$ for $q\geq 4$ even as $n \to \infty$, whence Theorem~\ref{thm:poisson} completes the proof.

To handle the remaining cases, we introduce some notation. Given a set $\B \subseteq [q]^{k}$ (which we will always choose to be either $L$ or the set $D:=\{\cc \in [q]^{k}:V_\cc \neq \emptyset\}$ of all non-empty boxes), to each large box~$V_\cc$, $\cc\in L \subseteq B$, we associate the matrix $M^{(\cc,\B)}=\big(M^{(\cc,\B)}_{j,\cc'}\big)_{j\in[k],\cc'\in \B}$ where $M^{(\cc,\B)}_{j,\cc'}=\id\{c_j=c'_j\}$. In other words $M^{(\cc,\B)}$ is a $k \times |B|$ matrix and each column is a 0-1 vector of length $k$ corresponding to some $\cc'\in B$ such that there is a 1 in row $j$ if $\cc$ and $\cc'$ agree in the $j$th component, and 0 otherwise.

The columns of $M^{(\cc,\B)}$ may be viewed as elements of $(\Z/q\Z)^k$. Let $\Mcol{\cc,\B}$ be the subgroup of $(\Z/q\Z)^k$ generated by the columns of $M^{(\cc,B)}$ and define
\[
 N_{\cc,\B}=|(\Z/q\Z)^k/\Mcol{\cc,\B}|
\]
to be the size of the quotient group. This quantity is useful due to the following two lemmas which tie combinatorial subspaces to the present algebraic setup. 

\begin{lemma}\label{lem:nc}
If there is at least one solution $\boldsymbol{a}\in (\Z/q\Z)^k$ to the congruence
\begin{equation}
 \boldsymbol{a}M^{(\cc, B)}\equiv \boldsymbol{b}\bmod q
\end{equation}
for fixed\/ $\boldsymbol{b}\in (\Z/q\Z)^{|B|}$, then the total number of solutions is given by $N_{\cc,B}$.
\end{lemma}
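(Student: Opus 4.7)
The plan is a short piece of elementary linear algebra over $\Z/q\Z$, finished by a duality identity for finite abelian groups. Write $\phi : (\Z/q\Z)^k \to (\Z/q\Z)^{|B|}$ for the $\Z/q\Z$-linear map $\phi(\boldsymbol{a}) = \boldsymbol{a} M^{(\cc,B)}$. The solution set to $\boldsymbol{a} M^{(\cc,B)} \equiv \boldsymbol{b} \pmod q$ is either empty or a coset of $\ker\phi$, and the hypothesis of the lemma excludes the empty case. So the number of solutions equals $|\ker\phi|$, and it suffices to show $|\ker\phi|=N_{\cc,B}$.

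For the second step I would rewrite $\ker\phi$ as an annihilator. Denote the columns of $M^{(\cc,B)}$ by $v_1,\dots,v_{|B|}\in(\Z/q\Z)^k$; then $\boldsymbol{a}\in\ker\phi$ iff $\boldsymbol{a}\cdot v_j\equiv0\pmod q$ for every $j$, iff $\boldsymbol{a}\cdot v\equiv 0\pmod q$ for every $v\in\Mcol{\cc,B}$. Writing $H=\Mcol{\cc,B}$ and $G=(\Z/q\Z)^k$, this says $\ker\phi=H^{\perp}$ with respect to the standard bilinear pairing $G\times G\to\Z/q\Z$.

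Finally I would invoke the identity $|H^{\perp}|=|G|/|H|$, valid for any subgroup $H$ of the finite abelian group $G$ paired with itself through a perfect pairing (which the standard dot product on $(\Z/q\Z)^k$ is, since $\Z/q\Z$ is self-dual as a $\Z/q\Z$-module). This can be proved via Pontryagin duality, identifying $H^{\perp}$ with the character group of $G/H$, or concretely by bringing a presentation matrix for $H$ into Smith normal form and reducing to the case where $H$ is diagonal with entries dividing $q$, in which case the formula is a direct calculation. Substituting back yields $|\ker\phi|=q^k/|\Mcol{\cc,B}|=N_{\cc,B}$, as required.

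The only step that needs a moment of thought is the duality identity in the last paragraph; it is standard but does hinge on the self-duality of $\Z/q\Z$ (if one tried the analogous pairing over a non-self-dual module the equality would fail). Everything else is formal once one makes the annihilator reformulation, and no graph-theoretic input from the rest of the paper is needed.
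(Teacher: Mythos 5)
Your proposal is correct and follows essentially the same route as the paper: reduce to computing $|\ker\phi|$, reinterpret the kernel as the annihilator of $\Mcol{\cc,B}$ under the standard pairing, and conclude $|\ker\phi|=q^k/|\Mcol{\cc,B}|$ by duality for finite abelian groups. The paper phrases the last step as an isomorphism with $\Hom\bigl((\Z/q\Z)^k/\Mcol{\cc,B},\Z/q\Z\bigr)$ rather than via a perfect-pairing identity, but this is the same argument.
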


\begin{lemma}\label{lem:ineq_composite}
 Let $q\geq 3$. For all\/ $L\subseteq [q]^k$ we have
 \begin{equation}\label{e:qsum}
  \sum_{\cc\in L} N_{\cc,L}\le q^k.
 \end{equation}
Equality holds if and only if\/ $L$ is a combinatorial subspace.
\end{lemma}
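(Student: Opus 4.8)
The plan is to interpret $N_{\cc,L}$ probabilistically and turn the sum into a counting identity. First I would fix $L$ and consider choosing a uniformly random $\boldsymbol{a}\in(\Z/q\Z)^k$. For each $\cc\in L$, the value $\boldsymbol{a}M^{(\cc,L)}\bmod q$ lies in $(\Z/q\Z)^{|L|}$, and by Lemma~\ref{lem:nc} the map $\boldsymbol{a}\mapsto\boldsymbol{a}M^{(\cc,L)}$ has fibres of size exactly $N_{\cc,L}$ over its image (which is the column span, a subgroup). Equivalently, $N_{\cc,L}/q^k$ is the probability that $\boldsymbol{a}M^{(\cc,L)}\equiv\boldsymbol{0}$, i.e.\ that $\boldsymbol{a}$ is orthogonal to every column of $M^{(\cc,L)}$. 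Note the $\cc$-th column of $M^{(\cc,L)}$ is the all-ones vector $\ones$ (since $c_j=c_j$ always), so in particular any such $\boldsymbol{a}$ satisfies $\sum_j a_j\equiv 0$. The key observation is a symmetry: writing $r(\boldsymbol a)$ for the equivalence relation on $[k]$ in which $j\sim j'$ iff $a_j=a_{j'}$, one checks that $\boldsymbol a\cdot(\text{column }\cc'\text{ of }M^{(\cc,L)})\equiv 0$ for all $\cc'$ is a condition that, for $\cc,\cc'\in L$, is \emph{symmetric} in $\cc$ and $\cc'$ in the relevant way — more precisely I would show $\sum_{\cc\in L}N_{\cc,L}=q^k\cdot\Prb_{\boldsymbol a}(\text{something})$ is not quite it, so instead I would proceed by the following direct double-counting.

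Consider the set $S=\{(\cc,\boldsymbol a):\cc\in L,\ \boldsymbol a\in(\Z/q\Z)^k,\ \boldsymbol a M^{(\cc,L)}\equiv\boldsymbol 0\bmod q\}$. By Lemma~\ref{lem:nc}, $|S|=\sum_{\cc\in L}N_{\cc,L}$, so it suffices to show $|S|\le q^k$ with equality iff $L$ is a combinatorial subspace. The condition $\boldsymbol a M^{(\cc,L)}\equiv\boldsymbol 0$ says: for every $\cc'\in L$, $\sum_{j:\,c_j=c'_j}a_j\equiv 0\bmod q$. I would now fix $\boldsymbol a$ and count the $\cc\in L$ satisfying this. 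Group the coordinates $[k]$ into the level sets of $\boldsymbol a$; the constraint only depends on $\cc$ through which coordinates of $\cc$ agree with which coordinates of each $\cc'\in L$, and it is most naturally handled by induction on $k$ — split off one coordinate, or one value of $q$ if $q$ is composite, and use that the analogous statement for the ``link'' $L$ restricted to a fixed value in some coordinate is again a set in a smaller power of $[q]$. The inductive step for the inequality should be clean; the characterization of equality is where the real content lies.

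For the equality case, one direction is the computation already sketched in the excerpt: if $L$ is a combinatorial subspace of dimension $r$, then up to permutations each column of each $M^{(\cc,L)}$ is determined by which variables $x_{i_1},\dots,x_{i_k}$ index its coordinates, one finds $\Mcol{\cc,L}$ has index exactly $q^{k-r}$ independent of $\cc$, and $|L|=q^r$, giving $\sum_{\cc\in L}N_{\cc,L}=q^r\cdot q^{k-r}=q^k$. For the converse, I would track the induction: equality in the inequality forces, at each splitting step, that $L$ restricted to each fixed value of the split coordinate is ``the same'' combinatorial subspace up to a permutation of parts, and that these pieces fit together consistently — i.e.\ the split coordinate is (a permutation of) one of the existing variables or a genuinely new free variable. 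Assembling these constraints shows $L$ must be a combinatorial subspace. \textbf{The main obstacle} I anticipate is exactly this reconstruction in the equality case: the inequality itself is a fairly mechanical induction or a rank/counting argument, but extracting a global combinatorial-subspace structure from the local equality conditions requires care to ensure the permutations chosen on different pieces are compatible, and to rule out degenerate ways the pieces could align without the whole set being a subspace. I would expect to need the hypothesis $q\ge 3$ here (it fails for $q=2$, matching the paper's running theme), entering through the fact that with at least three values one has enough ``room'' to separate coordinates, as was used in the $L=[q]^k$ analysis above.
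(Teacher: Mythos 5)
Your reformulation of $\sum_{\cc\in L}N_{\cc,L}$ as the number of pairs $(\cc,\boldsymbol{a})$ with $\cc\in L$ and $\boldsymbol{a}M^{(\cc,L)}\equiv\boldsymbol{0}$ is correct (it is exactly the dual count supplied by Lemma~\ref{lem:nc} with $\boldsymbol{b}=\boldsymbol{0}$), and your direct verification of the ``if'' direction --- that a combinatorial subspace on $r$ free variables gives $q^r\cdot q^{k-r}=q^k$ --- is also correct. The overall strategy, induction on $k$ by splitting off a coordinate, is the one the paper uses. But the proposal stops at the point where the work begins, in two places.

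First, the inductive step for the inequality is not as ``mechanical'' as you suggest: what one needs is that for each $\ca$ in the compressed set $\Gamma(L)\subseteq[q]^{k-1}$ the fibre satisfies $\sum_{\cc:\Gamma(\cc)=\ca}N_{\cc}\le q\,N_{\ca}$, and this rests on two observations you do not supply --- if two elements of $L$ share the same compression, subtracting their columns in $M^{(\cc)}$ shows $e_k\in\Mcol{\cc}$, whence $N_\cc=N_{\ca}$ and the fibre (of size at most $q$) contributes at most $qN_{\ca}$; while a singleton fibre contributes $tN_{\ca}$ where $t\mid q$ is the order of $e_k$ modulo $\Mcol{\cc}$. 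Second, and more seriously, the ``only if'' direction of the equality statement, which is the bulk of the lemma's content, is not proved but only flagged as ``the main obstacle''. The argument needed here is genuinely intricate: equality forces every fibre of $\Gamma$ to have size $1$ or $q$; if any fibre has size $q$ then $e_k$ lies in every column span and all fibres must have size $q$, reducing to $L=\Gamma(L)\times[q]$; in the all-singletons case one reduces (after deleting duplicate rows) to $\Gamma(L)=[q]^{k-1}$ and must show that the last coordinate is a permutation function of one of the first $k-1$. That final step requires analysing the layers $L(i)=\{\Gamma(\cc):c_k=i\}$ via a maximal difference-set argument showing each large layer is a coordinate hyperplane, and the hypothesis $q\ge3$ enters precisely there (to construct a point avoiding two prescribed values in each coordinate). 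None of this reconstruction appears in your proposal, so as it stands the lemma is not proved.
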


The proofs of both Lemma~\ref{lem:nc} and Lemma~\ref{lem:ineq_composite} are deferred to Section~\ref{sec:LA}. We use them in the following lemma, in which we bound the contributions of two particular configuration types. These will allow us to complete the main computation; of the two classes of remaining configurations identified earlier, the first where $L$ is not a combinatorial subspace and there are no small boxes will be covered directly by (i) below, whilst the second in which there are small boxes can be reduced to the special case of (ii). Crucially, the configurations considered are instances for which $\sum_{\cc\in L} N_{\cc,D} < q^k$.

\begin{lemma}\label{lemma:zetachoice}
\begin{enumerate}[label=(\roman*)]
\item\label{case_noplane}
 The total contribution to \eqref{eq:mainexp_mod} from configurations without conflicts, without small boxes, and where $L$ is not a combinatorial subspace is at most $2^{q^k}q^{q^{2k}}(1-q^{-k})^n$.
\item\label{case_smallboxes}
 The total contribution to \eqref{eq:mainexp_mod} from configurations without conflicts and with exactly one small box which has size $1$ is at most $2^{q^k}q^{q^{2k}}n(1-q^{-k})^{n-1}$.
\end{enumerate}
\end{lemma}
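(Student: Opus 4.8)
The plan is to bound the number of valid $\zeta$-assignments for a fixed $k$-tuple $\A$ of the relevant type, and then multiply by the number of such $\A$'s, which will be where the factor $(1-q^{-k})^n$ (or $n(1-q^{-k})^{n-1}$) comes from. For part~\ref{case_noplane}, fix such an $\A$. Since there are no conflicts and no small boxes, every non-empty box is large, so $L = D$. By Lemma~\ref{lemma:nolargeconflict}, for each pair of boxes $V_\cc, V_{\cc'}$ there is a well-defined common value $\zeta_{\cc,I_{\cc,\cc'}}$, and in particular for each $\cc\in L$ the vector of values $\bigl(\zeta_{v,j}\bigr)_{j\in[k]}$ is constant on $V_\cc$; call it $\zzeta_\cc$. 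Writing $\zzeta_\cc$ multiplicatively/additively via $(\Z/q\Z)^k$, the no-conflict condition between $V_\cc$ and $V_{\cc'}$ says exactly that $\zzeta_\cc$ paired with column $\cc'$ of $M^{(\cc,L)}$ equals the negative of $\zzeta_{\cc'}$ paired with the corresponding column — i.e.\ the $\zzeta_\cc$'s jointly satisfy a system of the form $\ca M^{(\cc,L)} \equiv \bb \bmod q$. Fixing one box's value, Lemma~\ref{lem:nc} tells us the number of consistent choices for the others is controlled by the $N_{\cc,L}$, and a short counting argument (fixing $\zzeta_{\cc_0}$ in $q^k$ ways, then propagating) shows the total number of valid $\zzeta$ is at most $q^k \cdot \prod$ or, more simply, at most $\prod_{\cc\in L} N_{\cc,L}$-type bound, which by Lemma~\ref{lem:ineq_composite} and the failure of the combinatorial-subspace condition is \emph{strictly} less than $q^{k}\cdot(\text{something})$; packaging crudely, the number of valid $\zzeta$ for this $\A$ is at most some constant $C_1 = C_1(q,k)$. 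We absorb $C_1$ into $2^{q^k}q^{q^{2k}}$ (a deliberately generous bound: $q^{q^{2k}}$ dominates any count of systems/solutions over $(\Z/q\Z)^k$ indexed by $\le q^k$ boxes, and $2^{q^k}$ the sign choices). So the contribution is at most $C_1$ times the number of tuples $\A$ of this type.

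It remains to count those tuples $\A$. The key point is that $L \subsetneq [q]^k$: since $L=D$ here and $L$ is not all of $[q]^k$ (if it were, it would trivially be a combinatorial subspace), there is some box $\cc^\dagger$ that is \emph{empty}. Bounding the number of $k$-tuples of distinct unordered partitions above by the number of ways to independently place each of the $n$ vertices into one of the $q^k$ boxes, the requirement that $V_{\cc^\dagger} = \emptyset$ forces each vertex to avoid that one box, giving at most $(q^k-1)^n = q^{kn}(1-q^{-k})^n$ such placements. After dividing by $q^{kn}$ in \eqref{eq:mainexp_mod}, the contribution is at most $C_1 (1-q^{-k})^n \le 2^{q^k}q^{q^{2k}}(1-q^{-k})^n$, as claimed.

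For part~\ref{case_smallboxes}, the argument is nearly identical. Now there is exactly one small box, of size $1$, say $V_{\cc^\dagger} = \{v^\dagger\}$, and all other non-empty boxes are large. Applying Lemma~\ref{lemma:nolargeconflict} with $\B = D$ (so $L = D\setminus\{\cc^\dagger\}$), the $\zzeta$-values on the large boxes are again constant per box and satisfy a linear system over $(\Z/q\Z)^k$; additionally $v^\dagger$ contributes its own $k$ root-values, constrained (no conflict with each large box) by further linear equations. In total the number of valid $\zzeta$ is still at most $C_1$ for the same generous reasons, absorbed into $2^{q^k}q^{q^{2k}}$. For the count of tuples: pick the special vertex $v^\dagger$ in at most $n$ ways and its box $\cc^\dagger$ in at most $q^k$ ways (again absorbed), and then the remaining $n-1$ vertices must \emph{avoid} box $\cc^\dagger$ (else it would not be small of size $1$), giving at most $n\cdot(q^k-1)^{n-1} = n\, q^{k(n-1)}(1-q^{-k})^{n-1}$ placements; dividing by $q^{kn}$ and absorbing constants yields the bound $2^{q^k}q^{q^{2k}}n(1-q^{-k})^{n-1}$.

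The main obstacle is the first step: getting a clean, correct bound on the number of valid $\zeta$-assignments from the linear-algebra setup, in particular phrasing the no-conflict relations between boxes precisely as a solvable system $\ca M^{(\cc,\B)}\equiv\bb$ so that Lemma~\ref{lem:nc} applies, and making sure the propagation over all $\cc\in L$ does not overcount. One must be careful that the $N_{\cc,\B}$ are defined with respect to $\B=D$ (not $L$) in the statement's hypothesis $\sum_{\cc\in L}N_{\cc,D}<q^k$, and that when small boxes are present $D$ strictly contains $L$, which only shrinks the quotients; the crude constant $C_1 \le 2^{q^k}q^{q^{2k}}$ is chosen precisely so that none of these bookkeeping subtleties affect the final inequality, with all the real content pushed into the exponential-in-$n$ savings $(1-q^{-k})^n$ coming from the forced-empty (or forced-singleton) box.
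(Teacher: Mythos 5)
There is a genuine gap at the heart of your argument: the claim that, for a fixed tuple $\A$, the number of conflict-free assignments $\zzeta$ is bounded by a constant $C_1(q,k)$. This rests on your assertion that the vector $(\zeta_{v,j})_{j\in[k]}$ is constant on each large box, but Lemma~\ref{lemma:nolargeconflict} only forces the \emph{products} $\zeta_{v,I_{\cc,\cc'}}=\prod_{j\in I_{\cc,\cc'}}\zeta_{v,j}$ to be constant on a box. Once these common products (the $b_{\cc,\cc'}$) are fixed, each vertex $v\in V_\cc$ independently has exactly $N_{\cc,D}$ choices of $(\zeta_{v,j})_j$ by Lemma~\ref{lem:nc}, so the number of valid $\zzeta$ for a fixed $\A$ is of order $\prod_{\cc\in L}N_{\cc,D}^{|V_\cc|}$ --- exponential in $n$ whenever some $N_{\cc,D}>1$. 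This really happens: for $q=3$, $k=2$ and $L=\{(1,1),(2,2),(3,3),(1,2)\}$ (size $4$, hence not a combinatorial subspace), the box $\cc=(3,3)$ disagrees with every other element of $L$ in both coordinates, so $\Mcol{\cc,L}=\langle(1,1)\rangle$ and $N_{\cc,L}=3$; taking all $b$'s zero, the vertices of $V_{(3,3)}$ each independently pick one of three solutions, giving at least $3^{|V_{(3,3)}|}$ valid assignments.

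Because of this, your source of exponential decay --- one forced-empty (or forced-singleton) box, giving at most $(q^k-1)^n$ placements --- cannot carry the proof: it only uses $|L|\le q^k-1$, whereas each vertex placed in box $\cc$ effectively carries weight $N_{\cc,D}\ge1$ from the $\zzeta$-multiplicity. The two counts must be combined per vertex: summing over box sizes, the multinomial theorem gives $\sum_{(|V_\cc|)}\frac{n!}{\prod_\cc|V_\cc|!}\prod_\cc N_{\cc,D}^{|V_\cc|}=\big(\sum_{\cc\in L}N_{\cc,D}\big)^n$, and the saving comes from the \emph{weighted} strict inequality $\sum_{\cc\in L}N_{\cc,D}\le q^k-1$ supplied by Lemma~\ref{lem:ineq_composite} --- a strictly stronger fact than the existence of an empty box. (For part~\ref{case_smallboxes} there is a further step your sketch omits: when $L$ \emph{is} a combinatorial subspace, Lemma~\ref{lem:ineq_composite} gives only $\sum_{\cc\in L}N_{\cc,L}=q^k$, and one must show that the extra column contributed by the small box strictly decreases some $N_{\cc,D}$ below $N_{\cc,L}$.) Your stated numerical bounds happen to be correct, but only because $\sum_\cc N_{\cc,D}\le q^k-1$; the derivation via ``constant $C_1$ times number of tuples'' does not establish them.
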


\begin{proof}
Given a particular $\A$ we determine the contribution from all configurations $(\A,\zzeta)$ of the types described above. Since these configurations do not admit conflicts involving large boxes, for all large boxes $V_\cc$ and non-empty boxes $V_{\cc'}$ we can define a common value $\zeta_{\cc,I_{\cc,\cc'}}$ equal to $\zeta_{v,I_{\cc,\cc'}}$ for all $v\in V_{\cc}$.
Now also fix a choice of these $\zeta_{\cc,I_{\cc,\cc'}}$ and let $\zeta_{\cc,I_{\cc,\cc'}}=\zeta_q^{b_{\cc,\cc'}}$ for all $\cc\in L, \cc' \in D$. We determine the number of assignments $\zzeta$ that comply with these values of $\zeta_{\cc,I_{\cc,\cc'}}$. Fixing a large box $V_\cc$ and writing $\zeta_{v,j}=\zeta_q^{a_{v,j}}$ for $v \in V_\cc$ leads to the constraints
\begin{equation*}
\sum_{j\in I_{\cc,\cc'}}a_{v,j}\equiv b_{\cc,\cc'}\bmod q,
\end{equation*}
one for every $\cc'\in D$. These can also be formulated as
\begin{equation*}
 \boldsymbol{a}M^{(\cc,D)}\equiv \boldsymbol{b}\bmod q,
\end{equation*}
where $\boldsymbol{a}=(a_{v,j})_{j\in [k]}\in (\Z/q\Z)^k$ and $\boldsymbol{b}=(b_{\cc,\cc'})_{\cc'\in [D]}\in(\Z/q\Z)^{|D|}$. Then, with $\boldsymbol{b}$ fixed,  Lemma~\ref{lem:nc} tells us that the number of solutions $\boldsymbol{a}$ to this system is at most $N_{\cc,D}$. Thus, there are at most $N_{\cc,D}^{|V_\cc|}$ choices of $\zzeta$ for vertices in $V_\cc$. 
By repeating the argument for each box and noting that there are at most $q^{q^{2k}}$ choices of $(b_{\cc,\cc'})_{\cc,\cc'}$, we find that for our fixed $\A$ the number of assignments $\zzeta$ for which there are no conflicts involving vertices in large boxes is bounded by
\[
 q^{q^{2k}}\prod_{\cc \in L} N_{\cc,D}^{|V_\cc|}.
\]
It follows that the configurations for a fixed $L$ (and coordinates $\cc^s$ for the small box in case \ref{case_smallboxes}) and fixed box sizes $|V_\cc|$ for $\cc \in L$ contribute at most
\[
 \frac{1}{q^{kn}}\frac{n!}{\prod_{\cc \in L} |V_\cc|!}\cdot q^{q^{2k}}\prod_{\cc \in L} N_{\cc,D}^{|V_\cc|}
\]
to the expectation in~\eqref{eq:mainexp_mod}. Here, the second factor is the multinomial coefficient representing the choices of $\A$ that lead to the box sizes that were fixed before. Note that this is also valid for case \ref{case_smallboxes} since $|V_{c^s}|=1$.

In case \ref{case_noplane}, since $L$ is not a combinatorial subspace we obtain from Lemma~\ref{lem:ineq_composite} that $\sum_{\cc\in D}N_{\cc,D}=\sum_{\cc\in L}N_{\cc,L} < q^k$.  Taking a sum over possible box sizes and applying the multinomial theorem, we see that the contribution to \eqref{eq:mainexp_mod} from configurations of type \ref{case_noplane} with just a fixed $L$ is at most
\begin{align}\label{eq:nochcontr1}
 \sum_{(|V_\cc|:\cc\in L)}\frac{n!}{\prod_{\cc \in L} |V_\cc|!}\cdot q^{-kn}q^{q^{2k}}\prod_{\cc\in L} N_{\cc,D}^{|V_\cc|}
 &= q^{q^{2k}}q^{-kn}\Big(\sum_{c\in L}N_{\cc,D}\Big)^n \nonumber\\
 &\le q^{q^{2k}}q^{-kn}(q^k-1)^n \nonumber\\
 &= q^{q^{2k}}(1-q^{-k})^n.
\end{align}
This establishes statement \ref{case_noplane} as there are at most $2^{q^k}$ choices of $L$ that do not form a combinatorial subspace (since these are subsets of $[q]^k$).

We claim that the strict inequality $\sum_{\cc\in L}N_{\cc,D}< q^k$ also holds in case~\ref{case_smallboxes}. This is immediate from Lemma~\ref{lem:ineq_composite} if $L$ is not a combinatorial subspace as clearly $N_{\cc,D}\le N_{\cc,L}$.
If $L$ is a combinatorial subspace, then Lemma~\ref{lem:ineq_composite} only tells us that $\sum_{\cc\in L}N_{\cc,L} = q^k$. However, adding a column corresponding to the small box $V_{\cc^s}$ will necessarily decrease $N_{\cc,D}$ for some $\cc\in L$, i.e.\ $N_{\cc,D}<N_{\cc,L}$. Indeed, when $L$ is of the form \eqref{e:comb} there must be some $j_1$, $j_2$ with $i_{j_1}=i_{j_2}$ but $\phi_{j_1}^{-1}(c^s_{j_1})\ne\phi_{j_2}^{-1}(c^s_{j_2})$. But then adding a column corresponding to $\cc^s$ to $M^{(\cc,L)}$ for any $\cc$ with $c_{j_1}=c^s_{j_1}$ and hence $c_{j_2}\ne c^s_{j_2}$ will ensure that $|\Mcol{\cc,D}|>|\Mcol{\cc,L}|$, and hence $N_{\cc,D}<N_{\cc,L}$. Thus, we again get $\sum_{\cc\in D}N_{\cc,D}<\sum_{\cc\in L}N_{\cc,L} \leq q^k$.

Now using an analogous calculation to case \ref{case_noplane}, the configurations of type \ref{case_smallboxes} with fixed $L$ and $\cc^s$ contribute at most
\begin{align}\label{eq:nochcontr2}
 \sum_{(|V_\cc|:\cc\in L)}\frac{n!}{\prod_{\cc \in L} |V_\cc|!}\cdot q^{-kn}q^{q^{2k}}\prod_{\cc\in L} N_{\cc,D}^{|V_\cc|}
 &= q^{q^{2k}}q^{-kn}n\Big(\sum_{c\in L}N_{\cc,D}\Big)^{n-1} \nonumber\\
 &\le q^{q^{2k}}q^{-kn}n(q^k-1)^{n-1} \nonumber\\
 &= q^{q^{2k}}q^{-k}n(1-q^{-k})^{n-1}.
\end{align}
The second result follows as there are at most $q^k$ choices of $\cc^s$ and $2^{q^k}$ choices of~$L$.
\end{proof}

Lemma~\ref{lemma:zetachoice}\ref{case_noplane} directly yields that $k$-tuples of partitions for which $L$ is not a combinatorial subspace and there are no small boxes, contribute $o(1)$ to \eqref{eq:mainexp_mod}. Indeed, by Lemma~\ref{lemma:removebad} we need only consider configurations without bad vertices and by Lemma~\ref{lemma:nolargeconflict} these do not admit any conflicts. These together provide the hypothesis of Lemma~\ref{lemma:zetachoice}\ref{case_noplane}.

The remaining $k$-tuples of partitions are those for which there is at least one small box.  Again by Lemma~\ref{lemma:removebad} and Lemma~\ref{lemma:nolargeconflict}, we may assume that in all such configurations all vertices $v \in V_{\cc'}$ are good and have the same value of $\zeta_{v,I_{\cc,{\cc'}}}$ for all large boxes $V_{\cc}$.

Fix a small box $V_{\cc^s} \ne\emptyset$ and an arbitrary vertex $v^\star\in V_{\cc^s}$. Remove all vertices in all small boxes except $v^\star$ and all vertices from large boxes that are in conflict with~$v^\star$. Then replace each removed vertex by a duplicate most common vertex. Note that the resulting configurations do not have any conflicts and hence, by Lemma~\ref{lemma:zetachoice}\ref{case_smallboxes}, contribute at most $2^{q^k}q^{q^{2k}}n(1-q^{-k})^{n-1}$ to \eqref{eq:mainexp_mod}. The total number of vertices replaced by most common vertices is $t\le q^k(2C\log n+2)$ as small boxes contain at most $2C\log n+2$ vertices and $v^\star$ was not in conflict with more than $C \log n$ vertices in any large box. As previously noted, this construction sends at most $\sum_t (nq^{2k})^t$ configurations to a single resulting configuration. Therefore, the total contribution to \eqref{eq:mainexp_mod} from configurations with small boxes is at most
\[
 \sum_{t\le q^k(2C\log n+2)} (nq^{2k})^t\cdot 2^{q^k}q^{q^{2k}}n(1-q^{-k})^{n-1} \leq e^{O((\log n)^2)-q^{-k}n}=o(1).
\]
This completes the proof of Theorem~\ref{thm:main}.

\section{Proof of Theorem~\ref{thm:main2}}\label{sec:proof2}

As the proof of Theorem~\ref{thm:main2} is very similar to that of Theorem~\ref{thm:main}, we will only describe the points at which the calculation deviates. 

Fix non-negative integers $a_0,\dots,a_{q-1}$ that satisfy $\sum_{x=0}^{q-1}a_x=q$. Recall that $T$ is the index set $\{(x,y): x=0,\dots,q-1, y=1,\dots, a_x\}$. We shall say that a vertex partition $A$ into sets $A_{x,y}$ with $(x,y) \in T$ is \emph{good} if 
\[
 d_{G[A_{x,y}]}(v) \equiv x \bmod q
\]
for all $(x,y)\in T$ and $v\in A_{x,y}$. We again begin by determining the probability that each partition in a fixed $k$-tuple $\A = (A^{(1)},A^{(2)},\dots,A^{(k)})$ is good. Assign roots of unity $\zeta_{v,j}$ as before and let $R$ again denote the set of all possible assignments of roots to vertices for each partition. For each vertex $v$ we have
\begin{equation}\label{eq:zetacases2}
 \frac{1}{q}\sum_{\zeta \in \mu_q} \zeta^{d_{G_j}(v)-x_{v,j}} = \begin{cases}
    1 & \text{if } d_{G_j}(v) \equiv x_{v,j} \bmod q \\
    0 & \text{otherwise} \\
  \end{cases}
\end{equation}
where $x_{v,j}$ represents the congruence class corresponding to the part of $A^{(j)}$ to which the vertex $v$ belongs (i.e. $ v \in (A^{(j)})_{x_{v,j},y}$ for some $y$). Letting $Y$ be the event that all $A^{(j)}$ are good, then
\begin{align*}
\mathbb{P}(Y)
&= \E \frac{1}{q^{kn}} \sum_{\zzeta \in R} \prod_{j \in [k]} \prod_{v \in [n]} \zeta_{v,j}^{d_{G_j}(v)-x_{v,j}}\\
&= \frac{1}{q^{kn}} \sum_{\zzeta\in R} \Bigg(\prod_{j \in [k]} \prod_{v \in [n]} \zeta_{v,j}^{-x_{v,j}}\Bigg) \Bigg(\E\prod_{j \in [k]} \prod_{vw \in E(G_j)} \zeta_{v,j} \zeta_{w,j} \Bigg) \\
&= \frac{1}{q^{kn}} \sum_{\zzeta\in R} \Bigg(\prod_{j \in [k]} \prod_{v \in [n]} \zeta_{v,j}^{-x_{v,j}}\Bigg) \Bigg(\E \prod_{vw \in E(G)} \prod_{j \in I_{v,w}} \zeta_{v,j} \zeta_{w,j}\Bigg)\\
&= \frac{1}{q^{kn}} \sum_{\zzeta\in R} \Bigg(\prod_{j \in [k]} \prod_{v \in [n]} \zeta_{v,j}^{-x_{v,j}}\Bigg) \Bigg( \prod_{\{v,w\}} \frac{1}{2}\Big(1+ \prod_{j \in I_{v,w}} \zeta_{v,j} \zeta_{w,j}\Big)\Bigg).
\end{align*}
Hence, we can write 
\begin{equation*}
 \E((X_n)_k)=\frac{1}{q^{kn}} \sum_{\A} \sum_{\zzeta\in R}\Bigg(\prod_{j \in [k]} \prod_{v \in [n]} \zeta_{v,j}^{-x_{v,j}} \Bigg)\Bigg( \prod_{\{v,w\}}\frac{1}{2}\Big(1+\prod_{j\in I_{v,w}}\zeta_{v,j}\zeta_{w,j}\Big)\Bigg)  
\end{equation*}
with the first sum taken over all $k$-tuples of distinct partitions. Following the arguments of the proof of Theorem~\ref{thm:main} we can show that summing over all configurations $(\A,\zzeta)$ with conflicts, or with $L_{\A}\ne [q]^k$,
\[
 \frac{1}{q^{kn}} \sum_{(\A,\zzeta)}\left|\prod_{\{v,w\}}\frac{1}{2}\Big(1+\prod_{j\in I_{v,w}}\zeta_{v,j}\zeta_{w,j}\Big)\right|=o(1).
\]
Furthermore we have $\big|\prod_{j \in [k]} \prod_{v \in [n]} \zeta_{v,j}^{-x_{v,j}}\big| = 1$, giving that the contribution from these configurations is indeed still~$o(1)$.

For configurations without conflicts and with all boxes large, note that $\prod_{\{v,w\}}\frac{1}{2}\Big(1+\prod_{j\in I_{v,w}}\zeta_{v,j}\zeta_{w,j}\Big)$ is always 1 and recall that for a fixed $\A$ the value of $\zeta_{v,j}$ depends only on the part of the $j$th partition to which $v$ belongs. As $\zeta_{v,j}=\zeta_{t,j}\in\{\pm1\}$ when $v\in (A^{(j)})_t$, this allows us to write
\[
 \prod_{j \in [k]} \prod_{v \in [n]} \zeta_{v,j}^{-x_{v,j}}
 =\prod_{t=(x,y)\in T } \prod_{j \in [k]}\zeta_{t,j}^{-x|(A^{(j)})_{t}|}.
\]
If $q$ is odd then all $\zeta_{t,j}=1$ for all $t$ and $j$, and this factor
is~1. If $q$ is even then we must sum over choices of $\zeta_{t,j}\in\{\pm1\}$. We find that there is no contribution from $k$-tuples of partitions $\A$ such that there is some $t=(x,y)\in T$ and $j\in[k]$ with $x|(A^{(j)})_t|$ odd, as then 
\[
\sum_{(\zeta_{t,j})}\Bigg( \prod_{t=(x,y)\in T } \prod_{j \in [k]} \zeta_{t,j}^{-x|(A^{(j)})_{t}|} \Bigg)=0.
\]
Hence only tuples $\A$ where $x|(A^{(j)})_t|$ is even for all $t$ and $j$ contribute, in which case we get a factor of $2^{kq}$ when we sum over the choices of the $\zeta_{t,j}$. Thus for $q$ even and recalling that $c=\sum a_{2x}$ is the number of parts with even degree conditions, we shall insist that the $q-c$ parts $A_{x,y}$ where $x$ is odd are of even size in all partitions.

To count the number of such partitions, it is convenient to work through \emph{ordered\/} partitions. For this, let us view the congruence conditions as a list consisting of $a_x$ entries equal to $x$ for each $x = 0,\ldots q-1$, and ordered so that all $q-c$ odd values appear before the $c$ even ones. 
So, let $P$ be the number of ordered partitions of $[n]$ with the additional condition that when $q$ is even the first $q-c$ parts have even size. 
Allowing for some of the parts to be empty, we note that $P$ is also the number of functions $[n]\to[q]$ where the preimages of $1,\dots,q-c$ are of
even size. The number of functions mapping $n_i$ elements to $i$ for $i=1,\dots,q$ is precisely the coefficient of $z_1^{n_1}z_2^{n_2}\dots$ in the multinomial expansion of $(z_1+z_2+\dots+z_q)^n$.
Thus, the number for which $n_1,\dots,n_{q-c}$ are even can be obtained by averaging this expression over all choices of $z_i=\pm1$ when $i\le q-c$ while fixing $z_i=1$ for $i>q-c$, i.e.\
\[
 \frac{1}{2^{q-c}}\sum_{\eps_i\in\{\pm1\}}(\eps_1+\dots+\eps_{q-c}+1+\dots+1)^n
 =\frac{1}{2^{q-c}}\sum_{i=0}^{q-c}\binom{q-c}{i}(q-2i)^n.
\]
When $c>0$, this is $2^{-(q-c)}(q^n+O((q-2)^n))$ as $n\to\infty$. However, when $c=0$
this becomes $2^{-q}(q^n+(-q)^n+O((q-2)^n))$. Thus the number of ordered partitions with the first $q-c$
parts of even size is $(1+o(1))2^{-(q-c)}q^n$ if $c>0$, and $(1+o(1))2^{-(q-1)}q^n$ if $c=0$ and $n$ is even
(and in fact precisely zero if $c=0$ and $n$ is odd). To summarise, we have
\[
 P=\begin{cases}
  q^n,&\text{$q$ odd,}\\
  (1+o(1))2^c\cdot q^n/2^q,&\text{$q$ even, $c>0$,}\\
  (1+o(1))2\cdot q^n/2^q,&\text{$q$ even, $c=0$, $n$ even.}
 \end{cases}
\]
It remains to adjust our counts for unordered partitions. Taking $k$-tuples of ordered partitions, it is possible to have a degenerate case in which two partitions $A^{(i)}$ and $A^{(j)}$ are the same unordered partition. Now either $x_{1} = x_{2}$ whenever $(A^{(i)})_{x_1,y_1}=(A^{(j)})_{x_2,y_2}$ meaning that corresponding parts have the same congruence condition, or else they are incompatible. For the former case, by ordering the elements of $\{A_{x,1}, \ldots, A_{x, a_x}\}$ for each $x\in \{0,\ldots, q-1\}$, each unordered good partition without empty parts gives rise to $F:=\prod_{x=0}^{q-1} a_x!$ ordered partitions with identical degree conditions. All of these count as a single partition when calculating the random variable~$X_n$.

The latter case occurs when there are $(A^{(i)})_{x_1,y_1}=(A^{(j)})_{x_2,y_2}$ for which $x_{1} \neq x_{2}$. However, it is then impossible for the vertices of $G[(A^{(i)})_{x_1,y_1}]=G[(A^{(j)})_{x_2,y_2}]$ to satisfy both degree conditions simultaneously so such $k$-tuples do not contribute to $\E((X_n)_k)$. 
Thus, again noting that only a $o(1)$ proportion of the partitions counted by $P$ have empty parts, there are $((1+o(1))P/F)_k$ tuples $\A$ of partitions where any part with an odd degree condition is even when $q$ is even. As $o(q^{kn})$ of these tuples have small boxes, we find that there are indeed $((1+o(1))P/F)_k$ tuples contributing $1$ each when $q$ is odd and $2^{qk}$ each when $q$ is even. Thus for $\E((X_n)_k)$ we get $q^{-kn}(P/F)_k(1+o(1))$ when $q$ is odd, and $q^{-kn}(P/F)_k2^{qk}(1+o(1))$ when $q$ is even.
The result now follows by application of Theorem~\ref{thm:poisson}. 

\section{Proofs of algebraic lemmas}\label{sec:LA}

In this section we prove the deferred algebraic lemmas used in the proofs of our main theorems.

The following general statement implies Lemma~\ref{lem:nc}. The proof is standard, but we include it for completeness.
\begin{lemma}\label{lem:nc_alg_sec}
Let $M$ be a $k\times \ell$ matrix with entries in $\Z/q\Z$ and\/ $\boldsymbol{b} \in (\Z/q\Z)^\ell$. If there is at least one solution $\boldsymbol{a} \in (\Z/q\Z)^k$ to the congruence
\begin{align}\label{e:congr_alg_sec}
 \boldsymbol{a}M\equiv \boldsymbol{b}\bmod q
\end{align}
then the total number of solutions is given by 
\[
 N = \left|\faktor{(\Z/q\Z)^k}{\Mcol{}}\right|= \frac{q^k}{|\Mcol{}|}.
\]
\end{lemma}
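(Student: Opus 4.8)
The plan is to read the solution set as a coset of the kernel of the group homomorphism
$\phi\colon(\Z/q\Z)^k\to(\Z/q\Z)^\ell$, $\phi(\boldsymbol{a})=\boldsymbol{a}M$, and then compute the size of that kernel. First, if $\boldsymbol{a}_0$ is any solution of $\boldsymbol{a}M\equiv\boldsymbol{b}\bmod q$, then $\boldsymbol{a}$ is a solution if and only if $\boldsymbol{a}-\boldsymbol{a}_0\in\ker\phi$; hence the solution set is exactly the coset $\boldsymbol{a}_0+\ker\phi$, so $N=|\ker\phi|$. By the first isomorphism theorem for finite abelian groups, $|\ker\phi|=q^k/|\operatorname{im}\phi|$. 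Thus it suffices to prove $|\operatorname{im}\phi|=|\Mcol{}|$, that is, that the subgroup of $(\Z/q\Z)^\ell$ generated by the \emph{rows} of $M$ (which is precisely $\operatorname{im}\phi$, since $\phi(\boldsymbol{a})=\sum_j a_j(\text{$j$th row of }M)$) has the same order as the subgroup of $(\Z/q\Z)^k$ generated by the \emph{columns} of $M$.

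To see this I would pass to the Smith normal form. Lift $M$ to an integer matrix $\widetilde M$ and write $U\widetilde M V=D$ with $U\in GL_k(\Z)$, $V\in GL_\ell(\Z)$ and $D$ the ``diagonal'' matrix carrying $d_1\mid d_2\mid\cdots\mid d_r$ ($r=\min(k,\ell)$) on its diagonal and zeros elsewhere. Reducing the identity $UMV=D$ modulo $q$, the matrices $U,V$ become automorphisms of $(\Z/q\Z)^k$ and $(\Z/q\Z)^\ell$ respectively. Since pre- or post-multiplying a matrix by an invertible matrix changes neither the order of its row-generated subgroup nor the order of its column-generated subgroup, it is enough to verify the equality for $D$. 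For the diagonal matrix $D$ the row-generated subgroup is $\{(d_1a_1,\dots,d_ra_r,0,\dots,0):a_i\in\Z/q\Z\}$, of order $\prod_{i=1}^{r}q/\gcd(d_i,q)$, and the column-generated subgroup admits the identical description and order. Hence $|\operatorname{im}\phi|=|\Mcol{}|$, and therefore $N=q^k/|\Mcol{}|=\bigl|(\Z/q\Z)^k/\Mcol{}\bigr|$, as claimed.

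The only delicate point is the bookkeeping around the Smith normal form: one must check that the integer identity $U\widetilde M V=D$ survives reduction mod $q$ with $U,V$ still invertible, and keep straight which side each unimodular factor multiplies on so that row spans and column spans transform as stated; none of this is substantive. An alternative for the middle step, should a fully self-contained argument be preferred, is to observe that $\ker\phi$ is exactly the annihilator $\Mcol{}^{\perp}$ of $\Mcol{}$ with respect to the standard perfect pairing $(\boldsymbol{x},\boldsymbol{y})\mapsto\sum_i x_iy_i$ on $(\Z/q\Z)^k$, and that $|H^{\perp}|=q^k/|H|$ for every subgroup $H$ because $H^{\perp}\cong\widehat{(\Z/q\Z)^k/H}$.
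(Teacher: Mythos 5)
Your proof is correct, and its opening move coincides with the paper's: the solution set is a coset of the kernel of $\boldsymbol{a}\mapsto\boldsymbol{a}M$, so $N=|\ker|$. Where you diverge is in computing that kernel. You apply the first isomorphism theorem to reduce to showing $|\operatorname{im}|=|\Mcol{}|$, i.e.\ that the row-generated subgroup of $(\Z/q\Z)^\ell$ and the column-generated subgroup of $(\Z/q\Z)^k$ have equal order, and you establish this via Smith normal form: both orders are invariant under unimodular row and column operations, and for the diagonal form both equal $\prod_i q/\gcd(d_i,q)$. The paper instead identifies $\ker$ directly with the annihilator of $\Mcol{}$ under the pairing $\boldsymbol{a}\mapsto(\vv\mapsto\boldsymbol{a}\vv^T)$, recognises the annihilator as $\Hom((\Z/q\Z)^k/\Mcol{},\Z/q\Z)$, and invokes duality for finite abelian groups --- which is exactly the ``alternative for the middle step'' you sketch in your final sentence. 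The trade-off: the duality argument is shorter and never mentions the image or the rows of $M$ at all, while your Smith normal form route is more computational but also more self-contained (it needs no character theory, only the classification of integer matrices up to unimodular equivalence) and yields the extra fact that row span and column span always have the same order over $\Z/q\Z$. The bookkeeping you flag --- that $U,V$ remain invertible mod $q$ because $\det U,\det V=\pm1$, and that left multiplication preserves the row span while acting as an automorphism on the ambient group of the column span (and symmetrically for right multiplication) --- is routine and checks out. Either route is a complete proof.
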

\begin{proof}
Define a group homomorphism 
\begin{align*}
M_{\rm Grp}\colon (\Z/q\Z)^k &\to (\Z/q\Z)^{\ell}\\
\boldsymbol{a}&\mapsto \boldsymbol{a}M
\end{align*}
where elements $\boldsymbol{a}$ are viewed as row vectors. The number of solutions to \eqref{e:congr_alg_sec} is the size of the preimage of~$\boldsymbol{b}$, which is either empty or a coset of the kernel. That is, if there exists a solution, the number of solutions is $|\ker M_{\rm Grp}|$.
Consider the natural bijection $\phi\colon (\Z/q\Z)^k \to \Hom((\Z/q\Z)^k, \Z/q\Z)$ given by sending $\boldsymbol{a} \mapsto (\boldsymbol{v}\mapsto \boldsymbol{a}\boldsymbol{v}^T)$. We have $\boldsymbol{a}M=\boldsymbol{0}$ if and only if $\boldsymbol{a}M\boldsymbol{v}^T=\boldsymbol{0}$ for all $\boldsymbol{v}\in (\Z/q\Z)^{\ell}$ viewed as row vectors, or equivalently $\boldsymbol{ab} = 0$ for all $\boldsymbol{b}\in \Mcol{} = \{M\boldsymbol{v}^T:\boldsymbol{v}\in(\Z/q\Z)^{\ell}\}$. Hence $\phi$ induces a bijection between elements of $\ker M_{\rm Grp}$ and morphisms in $\Hom((\Z/q\Z)^k, \Z/q\Z)$ that annihilate $\Mcol{}$. Such morphisms form a subgroup that is naturally isomorphic to $\Hom((\Z/q\Z)^k/\Mcol{},\Z/q\Z)$, which by duality for finite abelian groups is isomorphic to $(\Z/q\Z)^k/\Mcol{}$.
\end{proof}

For Lemma~\ref{lem:ineq_composite}, we recall the surrounding setup from Section~\ref{sec:mainproof}. Since the matrix $M^{(c,B)}$ only appears with $B=L$ in this statement, let us slightly simplify our notation here:
given any subset $L \subseteq [q]^k$, associate each $\cc\in L$ with the matrix
\begin{equation}\label{eq:matrixdef}
M^{(\cc)} =\big(M^{(\cc)}_{j,\cc'}\big)_{j\in[k],\cc'\in L}
\end{equation}
and let 
\[
N_\cc = \left|\faktor{(\Z/q\Z)^k}{\Mcol{\cc}}\right| = \frac{q^k}{|\Mcol{\cc}|}.
\]
Also recall that $q\geq3.$

\begin{proof}[Proof of Lemma~\ref{lem:ineq_composite}]
We use induction on $k$. The case $k=1$ is clear as the $\cc$ column of $M^{(\cc)}$ is $(1)$ and so generates the full group $\Z/q\Z$. Thus
\[
\sum_{c\in L} N_c = \sum_{c\in L} 1 \leq q,
\]
with equality if and only if $L = [q]$. 

Now assume that $k>1$. Define the \emph{$k$-compression} $\Gamma_k=\Gamma\colon [q]^k\to [q]^{k-1}$ for $\cc=(c_1,\dots,c_{k})$ by $\Gamma(\cc)=(c_1,\dots,c_{k-1})$,
and let $\Gamma(L)=\{\Gamma(\cc)\colon \cc \in L\}$. For $\Gamma(\cc)\in [q]^{k-1}$ we define $M^{(\Gamma(\cc))}$ and $N_{\Gamma(\cc)}$
as before, but in $k-1$ dimensions and with respect to $\Gamma(L)$. We note that the matrix $M^{(\Gamma(\cc))}$ is obtained from $M^{(\cc)}$ by deleting the $k$th row and then possibly deleting some duplicate columns, so $N_\cc=t\cdot N_{\Gamma(\cc)}$ for some $t\mid q$. Indeed, this $t$ is the smallest positive integer
such that $te_k\in \Mcol{\cc}$ where $e_k=(0,\dots,0,1)^T$. To see this, note that the kernel of the surjective group homomorphism $\Mcol{\cc}\to\Mcol{\Gamma(\cc)}$ which forgets the last coordinate is precisely $\langle te_k\rangle$, so
$N_\cc/N_{\Gamma(\cc)}=q/(|\Mcol{\cc}|/|\Mcol{\Gamma(\cc)}|)=q/|\langle te_k\rangle|=t$.

We have by induction that
\begin{equation}\label{e:qsumind}
 \sum_{\ca\in \Gamma(L)} N_{\ca}\le q^{k-1},
\end{equation}
with equality if and only if $\Gamma(L)$ is a combinatorial subspace.

Fix some $\ca \in \Gamma(L)$ and first suppose that there are $\cc_1,\cc_2\in L$ with $\cc_1\ne \cc_2$ but $\Gamma(\cc_1)=\ca=\Gamma(\cc_2)$.
These conditions imply that $\cc_1$ and $\cc_2$ differ only in the $k$th coordinate,
so subtracting the columns corresponding to $\cc_1$ and $\cc_2$
shows that $e_k$ is in $\Mcol{\cc_1}$ and $\Mcol{\cc_2}$ and hence $N_{\cc_1}=N_{\cc_2}=N_{\ca}$.

If on the other hand $\Gamma^{-1}(\ca)\cap L=\{\cc\}$ for $\ca \in \Gamma(L)$, then $N_{\cc}=tN_{\ca}\le qN_{\ca}$.
Thus we see that
\begin{equation}\label{e:qsumline}
 \sum_{\cc:\Gamma(\cc)=\ca} N_\cc\le q\cdot N_{\ca},
\end{equation}
with equality when $|\{\cc\in L:\Gamma(\cc)=\ca\}|=q$ or when $|\{\cc\in L:\Gamma(\cc)=\ca\}|=1$ and $te_k\notin \Mcol{\cc}$ for all $t<q$ where $\cc$ is the unique preimage of~$\ca$.
Hence we deduce that
\begin{equation}\label{e:qsum1}
 \sum_{\cc\in L} N_\cc\le q\sum_{\ca \in\Gamma(L)} N_{\ca}\le q\cdot q^{k-1}=q^k.
\end{equation}
as required.

Now for equality we must have $|\{\cc\in L:\Gamma(\cc)=\ca\}|\in \{1,q\}$ for every $\ca\in\Gamma(L)$.
However, if there exists any such $\ca$ with $|\{\cc\in L:\Gamma(\cc)=\ca\}|=q$ then
$e_k$ lies in the column span of every $M^{(\cc)}$. Indeed, we need only subtract the columns
corresponding to $\cc'$ and $\cc''$, where $\Gamma(\cc')=\Gamma(\cc'')=\ca$, $c'_k=c_k$ and $c''_k\ne c_k$.
Such $\cc'$ and $\cc''$ exist as all $\cc$ with $\Gamma(\cc)=\ca$ lie in~$L$.
Hence, for equality to hold in \eqref{e:qsum1} we would now need $|\{\cc\in L:\Gamma(\cc)=\ca\}|=q$ for
all $\ca\in\Gamma(L)$.

Thus we are reduced to two cases. The first is that $L=\Gamma(L)\times [q]$. In this case $N_\cc=N_{\Gamma(\cc)}$
for all $\cc\in L$ and equality in \eqref{e:qsum1} occurs if and only if it does in \eqref{e:qsumind}, which in turn happens if and only if  $\Gamma(L)$ is a combinatorial subspace. But $\Gamma(L)$ is a combinatorial subspace if and only if $L=\Gamma(L)\times[q]$ is a combinatorial subspace.

The second case is that $|\{\cc\in L:\Gamma(\cc)=\ca\}|=1$ for all $\ca \in \Gamma(L)$.
Again, for equality we must have $\Gamma(L)$ equal to a combinatorial subspace, and moreover
$e_k$ is not in the subgroup $\Mcol{\cc}$ for any $\cc\in L$. Writing $\Gamma(L)$ as in \eqref{e:comb}, we may assume all the $\phi_j$ are equal to the identity as that just corresponds to permuting the values in $[q]$ in the $j$th coordinate and this does not affect the matrix. We may also suppose that all the $i_j$ are distinct, since if any component of the combinatorial subspace is bound to another, say $x_{i_a}=x_{i_b}$, this would mean that rows $a$ and $b$ are identical in every matrix $M^{(\cc)}$. It is clear that deleting duplicate rows does not affect $|\Mcol{\cc}|$. Hence for the $a$-compression $\Gamma_a$ obtained by removing coordinate~$a$, we have $qN_{\Gamma_a(\cc)}=N_{\cc}$ for all $\cc$ where $N_{\Gamma_a(\cc)}$ is defined with respect to $\Gamma_a(L)$ in $k-1$ dimensions. Now we only have equality if $\Gamma_a(L)$ is a combinatorial subspace. But that implies that $L$ is also a combinatorial subspace as, given a combinatorial subspace representation of $\Gamma_a(L)$, we can bind coordinate $a$ to~$b$.

Thus we are reduced to the case $\Gamma(L)=[q]^{k-1}$. Define $L(i)=\{\Gamma(\cc):\cc\in L,\,c_k=i\}$ to be the layer consisting of elements of $\Gamma(L)=[q]^{k-1}$
whose corresponding element $\cc\in L$ has $k$th coordinate equal to~$i$.
As there are only $q$ possible values of~$i$, there must exist an $i$ for which $|L(i)|\ge q^{k-2}$.

Fix such an $i$ and for $\ca_1=\Gamma(\cc_1),\ca_2=\Gamma(\cc_2)\in L(i)$ let $S_{12}=\{j\in[k-1]:(\ca_1)_j\ne (\ca_2)_j\}$.
We note that $(\id\{j\in S_{12}^c\})_j^T$ is equal to the column corresponding to $\cc_2$ in $M^{(\cc_1)}$.
Also, if we have some $\ca_3=\Gamma(\cc_3)\in L(i')$ for $i'\ne i$ and $S_{13}=\{j:(\ca_1)_j\ne (\ca_3)_j\}$
is equal to $S_{12}$, then subtracting columns $\cc_2$ and $\cc_3$ in $M^{(\cc_1)}$ gives~$e_k$ which is a contradiction.
Hence, every $\ca_3$ with $S_{12}=S_{13}$ must lie in $L(i)$.

Now fix $\ca_1,\ca_2\in L(i)$ so as to maximise $|S_{12}|$.
We claim that $\ca_3\in L(i)$ if and only if $S_{13}\subseteq S_{12}$. First suppose there exists
some $\ca_3\in L(i)$ with $S_{13}$ not a subset of $S_{12}$. 
Then take some $\ca_4 \in \Gamma(L)=[q]^{k-1}$ for which $(\ca_4)_j=(\ca_3)_j$ for $j\notin S_{12}\cap S_{13}$, and in addition so that $(\ca_4)_j$ is distinct from $(\ca_1)_j$ and $(\ca_2)_j$ for all $j\in S_{12}\cap S_{13}$. Such a $\ca_4$ can be found since $q\ge 3$, meaning we have room to pick component values satisfying these conditions. With this choice of $\ca_4$, observe that $S_{14}=S_{13}$ and hence $\ca_4\in L(i)$. However $S_{24}=S_{12}\cup S_{13}$ then has more elements than $S_{12}$, contradicting the choice of $\ca_1$ and $\ca_2$.

Conversely, suppose $S_{13}\subseteq S_{12}$. Pick $\ca_4$ such that $(\ca_4)_j=(\ca_2)_j$
if $j\notin S_{13}$, and $(\ca_4)_j\ne (\ca_1)_j,(\ca_3)_j$ when $j\in S_{13}$.
Then $S_{14}=S_{12}$, so $\ca_4\in L(i)$. Also $S_{34}=S_{14}$, so (basing our arguments at $\ca_4$
and noting that $\ca_1,\ca_4\in L(i)$) we see that $\ca_3\in L(i)$.

Now given that $\ca_3\in L(i)$ if and only if $S_{13}\subseteq S_{12}$, as $|L(i)|\ge q^{k-2}$
we must have $|S_{12}|=k-2$ or $k-1$. If $|S_{12}|=k-1$, then $L(i)=\Gamma(L)$ and so
every other $L(j)$ is empty. In this case for any $\cc\in L$ we can find $\cc' \in L$ that differs from $\cc$ in all but the $k$th coordinate. This means that $e_k$ is a column of every $M^{(\cc)}$ and we do not have equality.

The other possibility is that $|S_{12}|=k-2$, in which case $S_{12}$ omits exactly one coordinate. Without loss of generality, say it is coordinate~1.
But then $L(i)=\{\ca\in [q]^{k-1}:(\ca)_1=x_i\}$ for some $x_i\in [q]$.
Now the remaining $L(j)$ have $(q-1)q^{k-2}$ elements in total, so there is some 
$j\neq i$
with $|L(j)|\ge q^{k-2}$. By the above argument this is again a co-dimension 1 subspace
given by an equation of the form $(\ca)_\ell=x_j$. But $L(j)\cap L(i)=\emptyset$, so $\ell=1$
and $x_j\ne x_i$.

Continuing in this manner, we see that each $L(j)=\{\ca:(\ca)_1=x_j\}$
for distinct elements $x_1,\dots,x_q\in [q]$. Writing $\phi_1(j)=x_j$, $\phi_i(j)=j$ for all $i>1$,
$i_j=j$ for $j<k$ and $i_k=1$, and $r=k-1$, we obtain a form satisfying~\eqref{e:comb} so $L$ is in fact a combinatorial subspace. Moreover, rows $1$ and $k$
of each $M^{(\cc)}$ are identical so we have $N_\cc\ge q$ for all $\cc\in L$. Hence equality in \eqref{e:qsum} must hold.
\end{proof}

\section{When $q=2$}\label{sec:final}

Observe that we have crucially used the assumption that $q>2$ in both the proof of Lemma~\ref{lem:ineq_composite} and the main body of the proof of Theorem~\ref{thm:main}. In fact, one can show directly that it is not possible to have a Poisson distribution when $q=2$. In this section, we determine the distribution of the number of partitions into two parts both inducing even graphs, both inducing odd graphs, and inducing one odd and one even graph.

It is convenient to work in the setting of uniform random symmetric matrices over $\F_2=\Z/2\Z$, that is, symmetric matrices with entries on the diagonal and upper triangle chosen independently randomly to be 0 or 1 each with probability $1/2$. Henceforth, we will just call these \emph{random symmetric matrices}. Let $\ones = \ones_n$ denote the $n\times 1$ vector of all 1s, $\zeroes = \zeroes_n$ denote the $n\times 1$ vector of all 0s, and $I=I_n$ denote the $n\times n$ identity matrix. We shall call a vector $\vv$ even if $\ones^T \vv=0$, and odd otherwise. Then say that a matrix $A$ is even if $\ones^TA=\zeroes^T$, and odd if $\ones^TA=\ones^T$. In other words, a matrix is even if all column sums are even, and similarly for odd.

To bring this back to our partitions, let $G$ be a graph on $n$ vertices with degree sequence $d_1,\dots,d_n$, and let $A$ be its adjacency matrix. Then $A$ is a random symmetric matrix with a fixed diagonal of 0s for $G=G_{n,1/2}$. Let $D$ be the $n\times n$ diagonal matrix with entries on the diagonal being $d_i \bmod 2$. Any ordered partition of the vertex set into $q=2$ parts can be represented by a 0-1 column vector of length~$n$, where all of the positions with 0 entries form one part, whilst the 1 entries form the other. 

For each degree parity condition on the two parts (even/even, even/odd, odd/odd), we shall set up a nonhomogeneous linear system $M \vv= \bb$ where $M$ is a random symmetric matrix over $\F_2$ that is either even or odd, and the solutions are precisely good (ordered) partitions. This immediately implies that the number of solutions, if there are any, must be a power of 2 and hence already precludes the possibility of a Poisson distribution. To obtain the distribution, it suffices to know the probability that the system is inconsistent (this can only occur in the odd/odd case), together with the rank distribution of random symmetric matrices. The latter can be deduced from the following counts.

\begin{theorem}[MacWilliams~\cite{Mac69}]\label{thm:rankcounts}
For $0\leq r\leq n$, the number of symmetric $n\times n$ matrices over $\F_2$ with rank $r$ is 
\[
\prod_{i=1}^{\lfloor r/2 \rfloor}  \frac{2^{2i}}{2^{2i}-1} \cdot \prod_{i=0}^{r-1} (2^{n-i}-1).
\]
\end{theorem}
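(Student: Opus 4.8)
The plan is to reduce the count to the number $a_r$ of \emph{invertible} symmetric $r\times r$ matrices over $\F_2$, and then to pin down $a_r$ by a second-order recurrence. For the reduction, I would associate to a symmetric matrix $A$ the symmetric bilinear form $B_A(x,y)=x^{T}Ay$ on $\F_2^{n}$: its radical is exactly $\ker A$, which has dimension $n-r$ when $\rank A=r$, and $B_A$ descends to a nondegenerate symmetric form on the quotient $\F_2^{n}/\ker A$. Since $A$ is recovered from $B_A$ through $A_{ij}=B_A(e_i,e_j)$, and conversely any codimension-$r$ subspace $V\le\F_2^{n}$ together with a nondegenerate symmetric form on $\F_2^{n}/V$ determines a symmetric matrix of rank exactly $r$ with kernel $V$, the assignment $A\mapsto(\ker A,\ \text{induced form})$ is a bijection. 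Counting the targets gives
\[
N(n,r)=\binom{n}{r}_{2}\,a_r,
\]
where $\binom{n}{r}_{2}=\prod_{i=0}^{r-1}\frac{2^{n-i}-1}{2^{r-i}-1}$ is the number of $r$-dimensional subspaces of $\F_2^n$, and $a_r$ is the number of nondegenerate symmetric forms on an $r$-dimensional $\F_2$-space, i.e.\ of invertible symmetric $r\times r$ matrices.

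For $a_n$ (with $a_0=a_1=1$), I would split on the top-left entry. If $A_{11}=1$, writing $A=\bigl(\begin{smallmatrix}1&v^{T}\\ v&A'\end{smallmatrix}\bigr)$, the Schur complement is the symmetric matrix $A'+vv^{T}$, and $A$ is invertible iff it is; so this case contributes $2^{n-1}a_{n-1}$ (choose $v$ freely, then an invertible symmetric $(n-1)\times(n-1)$ matrix). If $A_{11}=0$, writing $A=\bigl(\begin{smallmatrix}0&w^{T}\\ w&A'\end{smallmatrix}\bigr)$, invertibility forces $w\ne\zeroes$; conjugating by $\mathrm{diag}(1,P)$ with $P\in GL_{n-1}(\F_2)$ preserves this block shape and sends $w\mapsto P^{T}w$, so all nonzero $w$ contribute equally and we may take $w=e_1$. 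Then the leading $2\times2$ block $\bigl(\begin{smallmatrix}0&1\\ 1&\alpha\end{smallmatrix}\bigr)$ is invertible, and a direct Schur-complement computation shows that its complement is exactly the lower $(n-2)\times(n-2)$ block with the remaining entries free; this case contributes $(2^{n-1}-1)\cdot2^{n-1}a_{n-2}$. Hence $a_n=2^{n-1}a_{n-1}+2^{n-1}(2^{n-1}-1)a_{n-2}$ for $n\ge2$.

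A short induction on this recurrence then gives $a_n/a_{n-1}=2^{n}$ for $n$ even and $2^{n}-1$ for $n$ odd, so that $a_r=\prod_{i=1}^{\lfloor r/2\rfloor}2^{2i}\cdot\prod_{i=1}^{\lceil r/2\rceil}(2^{2i-1}-1)=\prod_{i=1}^{\lfloor r/2\rfloor}\frac{2^{2i}}{2^{2i}-1}\prod_{j=1}^{r}(2^{j}-1)$. Substituting into $N(n,r)=\binom{n}{r}_{2}a_r$ and cancelling $\prod_{j=1}^{r}(2^{j}-1)$ against $\prod_{i=0}^{r-1}(2^{r-i}-1)$ produces the stated formula. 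I expect the only delicate point to be the case $A_{11}=0$: one must verify that the $GL_{n-1}(\F_2)$-reduction really is a bijection (so that the factor $2^{n-1}-1$ is exact rather than merely an upper bound), and then carry out the $2\times2$ Schur-complement computation over $\F_2$, which decouples cleanly but is not entirely automatic.
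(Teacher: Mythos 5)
The paper does not prove Theorem~\ref{thm:rankcounts} at all: it is imported as a black box from MacWilliams, so there is no in-paper argument to compare against. Your proof is correct and self-contained. I checked the two pieces that carry the weight: (a) the bijection $A\mapsto(\ker A,\text{ induced form})$ is valid, since the radical of $B_A$ is $\ker A$ and a form with radical $V$ is the same datum as a nondegenerate form on the quotient, giving $N(n,r)=\binom{n}{r}_2 a_r$; and (b) the recurrence $a_n=2^{n-1}a_{n-1}+2^{n-1}(2^{n-1}-1)a_{n-2}$ holds. For the delicate point you flagged: the congruence action $A\mapsto\mathrm{diag}(1,P)^TA\,\mathrm{diag}(1,P)$ is transitive on the nonzero $w$ and restricts to a bijection between the fibres over $w$ and over $P^Tw$, so the factor $2^{n-1}-1$ is exact; and with $w=e_1$ the Schur complement of the block $\bigl(\begin{smallmatrix}0&1\\1&\alpha\end{smallmatrix}\bigr)$ (inverse $\bigl(\begin{smallmatrix}\alpha&1\\1&0\end{smallmatrix}\bigr)$) really is the lower $(n-2)\times(n-2)$ block on the nose, because the correction term is $u\cdot\zeroes^T+\zeroes\cdot u^T=0$; this leaves $\alpha$ and $u$ free, giving the $2^{n-1}$ factor. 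The induction $a_n/a_{n-1}=2^n$ ($n$ even) and $2^n-1$ ($n$ odd) follows from the recurrence, and the final cancellation of $\prod_{j=1}^r(2^j-1)$ against the denominator of the Gaussian binomial reproduces the stated product (I also spot-checked $n=2$: $1+3+4=8=2^{\binom{2}{2}+2}$). Your route differs from MacWilliams' original, which classifies symmetric matrices up to congruence (alternating versus non-alternating canonical forms) and uses orbit--stabilizer computations in $GL_n(\F_2)$; your recurrence via the top-left entry and Schur complements is more elementary and avoids the classification entirely, at the cost of the small orbit argument in the $A_{11}=0$ case.
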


\begin{corollary}\label{cor:evenrank}
The probability of a uniformly random even symmetric $n\times n$ matrix having rank $r$ is
\[
2^{-\binom{n}{2}} \cdot \prod_{i=1}^{\lfloor r/2 \rfloor}  \frac{2^{2i}}{2^{2i}-1} \cdot \prod_{i=0}^{r-1} (2^{n-i-1}-1)
\]
if\/ $0\le r\leq n-1$, and\/ $0$ otherwise.
\end{corollary}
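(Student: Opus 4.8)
The plan is to deduce the corollary from Theorem~\ref{thm:rankcounts} by exhibiting a rank-preserving bijection between the even symmetric $n\times n$ matrices over $\F_2$ and \emph{all} symmetric $(n-1)\times(n-1)$ matrices over $\F_2$. The first thing to note is that if $A$ is even symmetric then $\ones^{T}A=\zeroes^{T}$ gives $A\ones=\zeroes$ by symmetry, so $\ones\in\ker A$ and $\rank A\le n-1$; this is why the stated probability is $0$ when $r=n$.

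For the bijection, write a symmetric $n\times n$ matrix in block form
\[
 A=\begin{pmatrix} B & \vv\\ \vv^{T} & c\end{pmatrix},
\]
with $B$ symmetric of size $n-1$, $\vv\in\F_2^{n-1}$ and $c\in\F_2$. I would check that $A\ones=\zeroes$ holds precisely when $\vv=B\ones_{n-1}$ and $c=\ones_{n-1}^{T}B\ones_{n-1}$. Thus an even symmetric $A$ is determined by its top-left block $B$, and conversely every symmetric $B$ of size $n-1$ extends uniquely to an even symmetric $A$; in particular there are exactly $2^{\binom{(n-1)+1}{2}}=2^{\binom{n}{2}}$ even symmetric $n\times n$ matrices, which justifies the normalisation $2^{-\binom{n}{2}}$. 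Writing $P=\begin{pmatrix} I_{n-1}\\ \ones_{n-1}^{T}\end{pmatrix}$, a short computation shows this correspondence is exactly $A=PBP^{T}$.

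The remaining point---and the crux, though still elementary---is that $A\mapsto B$ preserves rank. Over $\F_2$ the matrix $P$ has full column rank with left inverse $Q=\begin{pmatrix} I_{n-1} & \zeroes\end{pmatrix}$, so from $A=PBP^{T}$ and $QP=I_{n-1}$ we also get $B=QAQ^{T}$, whence $\rank B\le\rank A\le\rank B$ and $\rank A=\rank B$. Therefore the number of even symmetric $n\times n$ matrices of rank $r$ equals the number of symmetric $(n-1)\times(n-1)$ matrices of rank $r$, which Theorem~\ref{thm:rankcounts} (with $n-1$ in place of $n$) evaluates as $\prod_{i=1}^{\lfloor r/2\rfloor}\frac{2^{2i}}{2^{2i}-1}\cdot\prod_{i=0}^{r-1}(2^{n-i-1}-1)$ for $0\le r\le n-1$, and $0$ for $r=n$. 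Dividing by $2^{\binom{n}{2}}$ gives the claimed probability. I expect no real difficulty beyond the block-matrix bookkeeping that yields $A=PBP^{T}$ and the ensuing rank computation, together with being careful that the uniform measure on even symmetric matrices assigns mass $2^{-\binom{n}{2}}$ to each.
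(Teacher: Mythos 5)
Your proof is correct and takes essentially the same route as the paper: both establish a rank-preserving bijection between even symmetric $n\times n$ matrices and all symmetric $(n-1)\times(n-1)$ matrices by deleting the last row and column (reconstructing them as sums of the others), then apply Theorem~\ref{thm:rankcounts} with $n-1$ in place of $n$. Your explicit factorisation $A=PBP^{T}$, $B=QAQ^{T}$ is just a cleaner justification of the rank preservation that the paper asserts more informally.
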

\begin{proof}
In an even symmetric matrix $M$, the entries in the last row and column are completely determined by the minor $(M)_{n,n}$. Explicitly, we can uniquely recover $M$ by adding an $n$th row given by the sum of the existing rows in the $(n-1)\times(n-1)$ matrix, and then the $n$th column given by the sum of columns in the $n\times(n-1)$ matrix. As this preserves rank, it follows that the rank distribution of a random even symmetric $n\times n$ matrix is the same as that of a random symmetric $(n-1)\times(n-1)$ matrix with no parity conditions. This shows $M$ cannot have full rank, and that the probability of having rank $r\leq n-1$ is equal to the probability that a (fully) random symmetric $(n-1)\times (n-1)$ has rank~$r$. The required probability now follows directly from Theorem~\ref{thm:rankcounts}, noting that there are $2^{\binom{n}{2}}$ many $(n-1)\times (n-1)$ symmetric matrices.
\end{proof}

We now address each of the three possible parity conditions in turn.
\begin{theorem}
 Let\/ $G=G_{n,1/2}$, and let\/ $X_n=X_n^{(2,0)}$ be the number of partitions of\/ $V(G)$
 into two sets both inducing even subgraphs. Then
 \[
 \Prb(X_n=2^k) = 2^{-\binom{n}{2}} \cdot \prod_{i=1}^{\lfloor (n-1-k)/2 \rfloor}  (1-2^{-2i})^{-1}\cdot \prod_{i=k+1}^{n-1} (2^i-1)
 \]
 for $0\le k\leq n-1$, and if\/ $x \neq 2^k$ for any such $k$ then $\Prb(X_n=x)=0$.
\end{theorem}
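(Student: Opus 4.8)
The plan is to set up the even/even condition as a linear system over $\F_2$ and then read off the distribution from the rank of an even random symmetric matrix via Corollary~\ref{cor:evenrank}. Let $A$ be the adjacency matrix of $G=G_{n,1/2}$ and write a prospective ordered partition as a $0$-$1$ vector $\vv$ (the $1$-entries forming one part, the $0$-entries the other). For a vertex $v$, its degree inside its own part of the partition induced by $\vv$ can be computed from $A$, $\vv$ and the degree sequence; I would check that the requirement ``all degrees in both induced subgraphs are even'' is equivalent to a homogeneous system $M\vv = \zeroes$ where $M = M(A)$ is obtained from $A$ by modifying the diagonal (replacing the $0$ diagonal of $A$ by $d_i \bmod 2$, i.e.\ $M = A + D$), so that $M$ is a random symmetric matrix over $\F_2$; one then verifies $M$ is an \emph{even} matrix (all column sums vanish, since $\ones^T(A+D) = \ones^TA + \ones^TD = \ones^TA + \ones^TA$ using that the $i$th column sum of $A$ is $d_i$). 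Note the system is homogeneous, hence always consistent, which is why the even/even case never has zero solutions.

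Next I would count solutions: the solution space of $M\vv=\zeroes$ over $\F_2$ has dimension $n - \rank M$, so the number of ordered good partitions is $2^{n-\rank M}$, and since unordered partitions are counted by identifying $\vv$ with $\ones+\vv$ (both always solutions, as $\ones \in \ker M$ because $M$ is even), the number of unordered good partitions is $X_n = 2^{n-\rank M}/2 = 2^{n-1-\rank M}$. Thus $X_n = 2^k$ exactly when $\rank M = n-1-k$, which forces $0 \le k \le n-1$ since $\rank M$ ranges over $\{0,\dots,n-1\}$ (an even symmetric matrix cannot have full rank, by Corollary~\ref{cor:evenrank}). Now $A = A(G_{n,1/2})$ is a random symmetric matrix with forced zero diagonal, and adding $D$ changes only the diagonal, so I must check that $M = A+D$ is distributed as a \emph{uniformly} random even symmetric matrix. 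The off-diagonal entries of $M$ are exactly those of $A$, hence independent uniform; given those, the diagonal of $M$ is determined (it is forced, being an even matrix), and conversely an even symmetric matrix is determined by its off-diagonal part. So the map $A \mapsto A+D$ is a bijection from \{symmetric matrices with zero diagonal\} onto \{even symmetric matrices\}, uniform-preserving; hence $M$ is a uniformly random even symmetric matrix.

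Finally, applying Corollary~\ref{cor:evenrank} with $r = n-1-k$:
\[
\Prb(X_n = 2^k) = \Prb(\rank M = n-1-k) = 2^{-\binom{n}{2}} \cdot \prod_{i=1}^{\lfloor (n-1-k)/2\rfloor} \frac{2^{2i}}{2^{2i}-1} \cdot \prod_{i=0}^{n-2-k}(2^{n-i-1}-1),
\]
and reindexing the last product via $j = n-1-i$ (so $j$ runs from $k+1$ to $n-1$) turns it into $\prod_{i=k+1}^{n-1}(2^i-1)$, while $\prod_{i=1}^{\lfloor(n-1-k)/2\rfloor} \frac{2^{2i}}{2^{2i}-1} = \prod_{i=1}^{\lfloor(n-1-k)/2\rfloor}(1-2^{-2i})^{-1}$. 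This is exactly the claimed formula, and $\Prb(X_n=x)=0$ for $x$ not a power of $2$ (or a power of $2$ outside the stated range) is immediate. The only genuinely non-routine step is the first one: correctly deriving that the even/even degree condition is the system $(A+D)\vv = \zeroes$ and that this matrix is even — the rest is bijective bookkeeping and a substitution into Corollary~\ref{cor:evenrank}.
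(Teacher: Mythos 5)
Your overall strategy is the same as the paper's, and most of the bookkeeping is right: the uniformity of $A+D$ as an even symmetric matrix, the factor of two between ordered and unordered partitions, and the substitution and reindexing in Corollary~\ref{cor:evenrank} all check out. But the step you yourself single out as the only non-routine one --- deriving the linear system --- is wrong, and the error conceals a genuine gap. For a vertex $v$ in the $1$-part the even-degree condition reads $(A\vv)_v\equiv 0$, while for a vertex in the $0$-part its degree inside its part is $d_v-(A\vv)_v$, so the condition reads $(A\vv)_v\equiv d_v$. Combining these gives $A\vv=D(\ones-\vv)$, i.e.\ the \emph{nonhomogeneous} system $(A+D)\vv=D\ones$, not $(A+D)\vv=\zeroes$. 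The kernel of $A+D$ describes the bipartitions across which every vertex has an even number of edges, which is not the same as the even/even condition; and since $D\ones$ is the vector of degree parities, it is nonzero with high probability, so the system cannot be homogenised away.

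This matters because your justification for consistency --- ``the system is homogeneous, hence always consistent'' --- evaporates. For a nonhomogeneous system the number of solutions is $2^{n-\rank(A+D)}$ only when $D\ones$ lies in the column space of $A+D$, and is $0$ otherwise; without ruling out the latter you cannot exclude $X_n=0$ (indeed, in the odd/odd case treated later in the paper the analogous system genuinely is inconsistent with positive probability). The paper closes this gap by invoking Gallai's theorem, which asserts exactly that every graph has an even/even bipartition, so the system always has at least one solution. With the corrected right-hand side and this one additional input, the rest of your argument --- including $\ones\in\ker(A+D)$, which still pairs each solution $\vv$ with $\vv+\ones$ --- goes through unchanged and yields the stated formula.
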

\begin{proof}
We use the notation defined at the start of the section. Suppose we have an ordered partition of $V(G)$ into two parts, represented by a 0-1 column vector $\vv$. The entries of $A\vv$ represent degrees into the 1-part of the partition, and entries of $D(\ones-\vv)$ are total degrees (i.e. degrees in $G$) for vertices of the 0-part and are 0 for vertices of the 1-part. Hence if $\vv$ corresponds to a vertex partition into two parts such that both induce even subgraphs, then it is a solution to 
\[
 A\vv=D(\ones-\vv)\iff (A+D)\vv=D\ones,
\]
and conversely. The solutions $\vv$ to this linear equation form an affine subspace of $\F_2$ and Gallai's theorem ensures that there is at least one solution $\vv$ to this linear equation. Hence, the probability that there are $2^k$ \emph{unordered} partitions is equal to the probability that $A+D$ has rank $r=n-1-k$. Noting that $A+D$ is a uniformly random even symmetric matrix, the even condition being guaranteed by definition of $D$, the result now follows from Corollary~\ref{cor:evenrank}. 
\end{proof}

\begin{theorem}
 Let\/ $G=G_{n,1/2}$, and let\/ $Y_n=X_n^{(1,1)}$ be the number of partitions of\/ $V(G)$ into two sets,
 one inducing an even subgraph and the other inducing an odd subgraph. If\/ $n$ is odd, then
 \[
  \Prb(Y_n=2^k) = 2^{-\binom{n}{2}} \cdot \prod_{i=1}^{\lfloor (n-1-k)/2 \rfloor}(1-2^{-2i})^{-1}\cdot \prod_{i=k+1}^{n-1} (2^i-1)
 \]
 for $0\leq k \leq n-1$, and\/ $\Prb(Y_n=x)=0$ when $x \neq 2^k$ for any such~$k$.
 If\/ $n$ is even, then $\Prb(Y_n=x) = \Prb(Y_{n-1}=x)$.
\end{theorem}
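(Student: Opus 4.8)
Put $A$ for the adjacency matrix of $G_{n,1/2}$, $D$ for the diagonal matrix of degree parities, as in the even/even case just treated. The plan is to set up, for the even/odd count, a linear system over $\F_2$ exactly in the same spirit, and then read off the distribution from the rank distribution of the relevant random symmetric matrix. Encode an ordered even/odd partition by $\vv\in\{0,1\}^n$, taking $S_1=\{i:v_i=1\}$ to be the part that should induce an odd subgraph and $S_0$ the part inducing an even subgraph. As in the even/even argument, $(A\vv)_i$ is the degree of $i$ into $S_1$ and, for $i\in S_0$, $(D(\ones-\vv))_i$ is its total degree (and $0$ for $i\in S_1$); writing out both parity requirements and combining them over $\F_2$ gives that $\vv$ is good if and only if $(A+D+I)\vv=D\ones$. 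Put $M:=A+D+I$: off the diagonal it agrees with $A$, while $M_{ii}=d_i+1\equiv 1+\sum_{j\ne i}M_{ij}$, which says precisely that every column of $M$ sums to $1$; so as $A$ ranges over adjacency matrices of $G_{n,1/2}$, the matrix $M$ ranges \emph{uniformly} over odd symmetric matrices over $\F_2$. The even/odd corollary of Gallai's theorem recalled in the introduction makes the system always consistent, and $\ones-\vv$ is never also a solution (since $M\ones=\ones\ne\zeroes$), so solutions biject with the partitions counted by $Y_n$; hence $Y_n=2^{\,n-\rank M}$, and the task reduces to finding the rank distribution of a random odd symmetric $n\times n$ matrix.

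For $n$ odd, I would use the involution $\Phi(M)=M+\ones\ones^T$. Since $n$ is odd, $\ones^T\Phi(M)=\ones^T+n\ones^T=\zeroes^T$, so $\Phi$ is an involutive bijection between odd and even symmetric $n\times n$ matrices, and the key claim is that $\rank\Phi(M)=\rank M-1$ always. This follows because $M\ones=\ones$, hence $\ones\in\mathrm{col}(M)=(\ker M)^\perp$, so every vector of $\ker M$ is even and $\Phi(M)$ annihilates $\ker M$; while $\Phi(M)\ones=\ones+n\ones=\zeroes$ with $\ones\notin\ker M$, so $\dim\ker\Phi(M)\ge\dim\ker M+1$, the reverse inequality being automatic for a rank-one perturbation. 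Thus the rank of a random odd symmetric $n\times n$ matrix equals, in distribution, one plus the rank of a random even symmetric $n\times n$ matrix; pushing this through Corollary~\ref{cor:evenrank} shows $Y_n$ has exactly the distribution of $X_n^{(2,0)}$ established above, which gives the stated formula.

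For $n$ even, $\ones^T\ones=0$, so I would split off a non-degenerate plane containing $\ones$. Fix $w=e_1$ (any vector with $\ones^Tw=1$) and set $W=\langle\ones,w\rangle$; since $M\ones=\ones$, the Gram matrix of $M$ on $W$ in the basis $(w,\ones)$ is $\bigl(\begin{smallmatrix}w^TMw&1\\1&0\end{smallmatrix}\bigr)$, of determinant $1$, so $M|_W$ is non-degenerate and $\F_2^n=W\perp_M W^{\perp_M}$, giving $\rank M=2+\rank\bigl(M|_{W^{\perp_M}}\bigr)$. Transporting $M|_{W^{\perp_M}}$ to a fixed $(n-2)$-dimensional complement of $W$ via the projection parallel to $W$, a direct computation in the basis $(w,\ones,e_2,\dots,e_{n-1})$ shows that the resulting symmetric $(n-2)\times(n-2)$ matrix equals $C+(w^TMw)\ones\ones^T+b\ones^T+\ones b^T$, where the triple $(w^TMw,b,C)\in\F_2\times\F_2^{n-2}\times\mathrm{Sym}_{n-2}(\F_2)$ is exactly the free parameter describing $M$ in that basis. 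This is a surjective $\F_2$-linear function of the parameter, so this residual matrix is uniform over $\mathrm{Sym}_{n-2}(\F_2)$; hence $\rank M$ is distributed as $2$ plus the rank of a uniform symmetric $(n-2)\times(n-2)$ matrix. Applying the $n$-odd analysis in size $n-1$ (together with the even-to-$(n-2)$ reduction behind Corollary~\ref{cor:evenrank}) shows the rank of a random odd symmetric $(n-1)\times(n-1)$ matrix is distributed as $1$ plus the rank of a uniform symmetric $(n-2)\times(n-2)$ matrix; comparing, $n-\rank M$ and $(n-1)-\rank M_{n-1}$ have the same law for $M_{n-1}$ a random odd symmetric $(n-1)\times(n-1)$ matrix, and since $Y_n=2^{\,n-\rank M}$ and $Y_{n-1}=2^{\,(n-1)-\rank M_{n-1}}$ this yields $\Prb(Y_n=x)=\Prb(Y_{n-1}=x)$.

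The main obstacle is the $n$ even reduction: one must not only identify the non-degenerate plane $W\ni\ones$, but verify that after transporting $M|_{W^{\perp_M}}$ to a fixed subspace the residual form is a surjective \emph{linear} image of the free entries of $M$ — a priori $W^{\perp_M}$ itself moves with $M$, so it is not clear this form is even linear in $M$, and the explicit basis computation is what makes it so, hence uniform. The remaining ingredients — the linear-algebraic reformulation, consistency via Gallai's corollary, the rank-one involution for $n$ odd, and the known rank enumeration of Theorem~\ref{thm:rankcounts} and Corollary~\ref{cor:evenrank} — are routine.
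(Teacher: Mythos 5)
Your proposal is correct. The linear--algebraic setup (the system $(A+D+I)\vv=D\ones$, consistency via the corollary to Gallai's theorem, and the observation that $M=A+D+I$ is a uniform random odd symmetric matrix) and your treatment of the case $n$ odd match the paper's proof: the paper likewise adds $\ones\ones^T$ to pass between odd and even symmetric matrices and shows the rank shifts by exactly one, so that part is essentially identical. For $n$ even you take a genuinely different route. The paper first normalises so that $M_{11}=1$ (using that $M$ and $M+\ones\ones^T$ have the same rank when $n$ is even), then clears the first row and column by adding $\ones$ to every row and column having a $1$ in the first position (except the first), leaving a uniform odd symmetric $(n-1)\times(n-1)$ block of rank $\rank M-1$; this reduces the even case directly to the odd case one dimension down. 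You instead split off the $M$-non-degenerate plane $W=\langle e_1,\ones\rangle$ (non-degenerate because $e_1^TM\ones=1$ while $\ones^TM\ones=n=0$), so that $\rank M=2+\rank(M|_{W^{\perp_M}})$, and show the transported residual form $B=C+(w^TMw)\ones\ones^T+b\ones^T+\ones b^T$ is uniform over symmetric $(n-2)\times(n-2)$ matrices. The point you correctly flag as the crux --- that $(w^TMw,b,C)$ are free, independent, uniform parameters --- does hold: in the Gram matrix of $M$ with respect to the fixed basis $(e_1,\ones,e_2,\dots,e_{n-1})$, the oddness constraint pins down exactly the row and column indexed by $\ones$ and nothing else, and $B$ is a surjective linear image of the remaining entries. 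Comparing $n$ and $n-1$ against the common $(n-2)\times(n-2)$ rank distribution then yields $\Prb(Y_n=x)=\Prb(Y_{n-1}=x)$. Both arguments are valid; the paper's is the more elementary row-reduction, while yours is more structural and makes the preservation of the nullity transparent, at the cost of the uniformity verification for the residual form.
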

\begin{proof}
It suffices to count ordered partitions for which the 1-part is odd and the 0-part is even since every unordered partition satisfying the degree conditions has a unique ordering for which this is true. The partitions we wish to count are then precisely the solutions to
\[
 A\vv=D(\ones-\vv)+\vv \iff (A+D+I)\vv= D\ones.
\]
By the corollary to Gallai's theorem, this system has at least one solution. Thus, to determine the distribution of $Y_n$ it suffices to know the rank distribution of $O=A+D+I$. 

Let $n$ be odd. Since $A+D$ is a uniform random even symmetric matrix, both $A+D+I$ and $A+D+\ones\ones^T$ are uniform random odd symmetric matrices and hence have the same rank distribution as each other. We will work with the latter. Note that $\rank(A+D+\ones\ones^T) \leq \rank(A+D)+\rank(\ones\ones^T) = \rank(A+D)+1$.
The column space of $(A+D+\ones\ones^T)$ contains $(A+D+\ones\ones^T)\ones = \ones$ and thus also contains the column space of $A+D$ as, for any vector $\vv$, we have $(A+D+\ones\ones^T)\vv \in \{(A+D)\vv, (A+D)\vv + \ones\}$. The odd vector $\ones$ is not in the column space of $A+D$, giving  $\rank(A+D)<\rank(A+D+\ones\ones^T)$ and hence $\rank(A+D+\ones\ones^T)= \rank(A+D)+1$.

Then for $1\leq r \leq n$ we have
\begin{align*}
\Prb(Y_n=2^{n-r})&=\Prb(\rank(A+D+I)=r)\\
&=\Prb(\rank(A+D+\ones\ones^T)=r)\\
&=\Prb(\rank(A+D)=r-1)
\end{align*}
which is given by Corollary~\ref{cor:evenrank}. 

For $n$ even, the odd random symmetric matrices $O=A+D+I$ and $O+\ones\ones^T$ have the same rank. To see this, it is enough to note that 
\[
\{O\vv, O(\vv+\ones)\} =\{(O+\ones\ones^T)\vv, (O+\ones\ones^T)(\vv+\ones)\}
\]
for all~$\vv$, which is easily verified by considering the parity of the number of 1s in $\vv$. Thus, we may assume that $O$ is an odd random symmetric matrix with a 1 in the top-left entry as this still has the same rank distribution.

Note that since $O$ is odd, it has $\ones^T$ in its row space. We now add $\ones^T$ to all rows of $O$ which have a $1$ in the first entry, except the first row. The resulting matrix $O'$ still has both row and column sums odd as we added an even vector to an even number of rows. In particular, $\ones^T$ is still in the row space and we thus did not change the row space, i.e.\ $\rank(O')=\rank(O)$. 
In the same way, we can add $\ones$ to all columns of $O'$ which have a $1$ in the first entry, except the first column to obtain an odd symmetric matrix $O''$ which has the same column space as~$O'$. In particular, $\rank(O'')=\rank(O')=\rank(O)$ and the first row and column of $O''$ is $e_1$. The remaining $(n-1)\times(n-1)$ matrix $O_{<11>}$ is an odd (uniform) random symmetric matrix since, given the first row of $O$, we can reconstruct $O$ from $O_{<11>}$.
As $\rank(O_{<11>})=\rank(O)-1$, the distribution of nullities, and hence number of solutions $\vv$, is the same for $n$ as for $n-1$ when $n$ is even. 

\end{proof}

\begin{theorem}\label{t:oo}
 Let\/ $G=G_{n,1/2}$ with $n$ even, and let\/ $Z_n=X_n^{(0,2)}$ be the number of partitions of\/ $V(G)$
 into two sets both inducing odd subgraphs. Then
 \[
 \Prb(Z_n=2^k) = \frac{2^{n-k-1}-1}{2^{\binom{n}{2}}(2^{n-1}-1)} \cdot \prod_{i=1}^{\lfloor (n-1-k)/2 \rfloor}  (1-2^{-2i})^{-1}\cdot \prod_{i=k+1}^{n-1} (2^i-1)
 \]
 for $0\le k\leq n-1$. We have $\Prb(Z_n=x)=0$ if\/ $x \neq 0$ and $x \neq 2^k$ for any $0\le k\leq n-1$, while $\Prb(Z_n=0) \neq 0$ can be obtained from the probability of the complement.
\end{theorem}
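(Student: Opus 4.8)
The plan is to follow the pattern of the preceding two proofs: encode an ordered bipartition of $V(G)$ by a vector $\vv\in\F_2^n$, and show that the good ordered bipartitions are exactly the solutions of a linear system $M\vv=\bb$ over $\F_2$ where $M$ is a uniform random even symmetric matrix. Recording that a vertex $v$ in the $1$-part has degree $(A\vv)_v$ into its own part while a vertex $v$ in the $0$-part has degree $d_v+(A\vv)_v$ there, the requirement that both induced subgraphs be odd becomes $A\vv=D(\ones-\vv)+\ones$, i.e.\
\[
 (A+D)\vv=\ones+D\ones .
\]
Exactly as in the even/even case, $M:=A+D$ is a uniform random even symmetric matrix; and since $A$ has zero diagonal we have $M_{vv}=d_v\bmod 2$, so $D\ones=\operatorname{diag}(M)$ and the system reads $M\vv=\ones+\operatorname{diag}(M)$. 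Because $M$ is even, $M\ones=\zeroes$, so if $\vv$ is a solution then so is $\vv+\ones$; the solutions therefore come in pairs $\{\vv,\vv+\ones\}$, each pair being a single unordered bipartition. Hence, if the system is consistent the number of good unordered bipartitions is $2^{\,n-1-\rank M}$, and it is $0$ otherwise. Since $M$ is even we have $\rank M\le n-1$, so $Z_n$ takes only the values $0$ and $2^k$ with $0\le k\le n-1$, and it remains to compute $\Prb(\text{the system is consistent and }\rank M=r)$ for $r=n-1-k$.

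For the consistency, note the elementary identity $\vv^TM\vv=\operatorname{diag}(M)^T\vv$ valid for any symmetric $M$ over $\F_2$; applied to $\vv\in\ker M$ it shows $\operatorname{diag}(M)\perp\ker M$, i.e.\ $\operatorname{diag}(M)$ always lies in $\operatorname{colsp}(M)=(\ker M)^\perp$. Therefore $M\vv=\ones+\operatorname{diag}(M)$ is consistent if and only if $\ones\in\operatorname{colsp}(M)$. Since $M$ is even, every column of $M$ lies in $E:=\{\vv\in\F_2^n:\ones^T\vv=0\}$, a subspace of dimension $n-1$ which contains $\ones$ precisely because $n$ is even; so, conditioned on $\rank M=r$, the column space is an $r$-dimensional subspace of $E$ and we need the probability that it contains the fixed nonzero vector $\ones\in E$.

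The crux is that, conditioned on $\rank M=r$, the column space of $M$ is uniformly distributed over the $r$-dimensional subspaces of $E$. This is a symmetry argument in the spirit of Corollary~\ref{cor:evenrank}: the group $H=\{Q\in GL_n(\F_2):Q^T\ones=\ones\}$ acts on the set of rank-$r$ even symmetric matrices by $M\mapsto QMQ^T$, preserving symmetry, the evenness condition (as $QMQ^T\ones=QM\ones=\zeroes$) and the uniform measure, and the induced action on column spaces is $W\mapsto QW$. One checks that $H$ preserves $E$ and that the resulting homomorphism $H\to GL(E)$ is onto (extend any $\phi\in GL(E)$ to an element of $H$ by fixing a vector $e_1\notin E$), so $H$ acts transitively on the $r$-subspaces of $E$; hence any $H$-invariant probability measure on these subspaces — in particular the law of $\operatorname{colsp}(M)$ given $\rank M=r$ — is uniform. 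Consequently
\[
 \Prb\big(\ones\in\operatorname{colsp}(M)\mid\rank M=r\big)=\frac{\#\{r\text{-subspaces of }E\text{ through }\ones\}}{\#\{r\text{-subspaces of }E\}}=\frac{2^r-1}{2^{\,n-1}-1},
\]
by the standard count of subspaces of an $(n-1)$-dimensional space through a fixed line.

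Multiplying this by $\Prb(\rank M=n-1-k)$ as given by Corollary~\ref{cor:evenrank} (with $r=n-1-k$) and simplifying yields the stated formula; it also gives the correct value $0$ at $k=n-1$, where $r=0$. Finally $\Prb(Z_n=0)$ is obtained as $1$ minus the sum over $0\le k\le n-1$ of these probabilities, and $\Prb(Z_n=x)=0$ for every other $x$ by the first paragraph. I expect the main obstacle to be the uniformity step: one must check carefully that the evenness constraint is exactly stable under the subgroup $H\leq GL_n(\F_2)$ and that $H$ still surjects onto $GL(E)$, so that the transitivity — hence the clean ratio $(2^r-1)/(2^{n-1}-1)$ — genuinely holds; the rest is bookkeeping with the formula from Corollary~\ref{cor:evenrank}.
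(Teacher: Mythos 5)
Your proposal is correct and follows essentially the same route as the paper: the same linear system $(A+D)\vv=D\ones+\ones$, the same reduction to computing $\Prb\big(\ones\in\operatorname{colsp}(A+D)\mid \rank(A+D)=r\big)$, and the same transitivity argument via the group of odd invertible matrices acting by $M\mapsto QMQ^T$ to obtain the ratio $(2^r-1)/(2^{n-1}-1)$, followed by multiplication with the rank distribution from Corollary~\ref{cor:evenrank}. The only (valid) cosmetic difference is that you establish $\operatorname{diag}(M)\in\operatorname{colsp}(M)$ via the identity $\vv^TM\vv=\operatorname{diag}(M)^T\vv$ rather than citing Gallai's theorem as the paper does.
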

\begin{proof}
Fix an even $n$. Ordered partitions for which both parts induce odd subgraphs are given by the solutions of
\[
 A\vv=D(\ones-\vv)+\ones \iff (A+D)\vv= D\ones+\ones,
\]
where $A+D$ is a random even symmetric matrix. Unlike the other degree conditions, it is possible for this system to be inconsistent, meaning $G$ has no such partitions. We can write $\Prb(Z_n=2^k)$ as the probability that there is at least one solution and the rank of $A+D$ is $r=n-k-1$. The probability for the latter is given by Corollary~\ref{cor:evenrank}, so it remains to show that the probability that there is at least one solution conditioned on the rank of $A+D$ being $r$ is $(2^r-1)/(2^{n-1}-1)$.

Let $E=A+D$. By Gallai's theorem we already know that $D\ones$ is in the column space of $E$, so we equivalently determine the probability that $\ones$ is also in the column space of $E$ which is a random even symmetric matrix with given rank $r$. Let $H$ be the set of (not necessarily symmetric) odd invertible matrices $M$ (note that being odd here only requires the condition
$\ones^TM=\ones^T$ on the column sums, the row sums can have either parity.)

We claim that $H$ is a group. It is clear that $H$ contains the identity and is closed under multiplication. Since $H$ is finite and its elements are invertible, these must all have finite order, so it follows that $H$ is also closed under inverses. Now let $\mathcal{E}$ be the set of all even symmetric matrices. Then $H$ acts on $\mathcal{E}$ by the group action which sends $E\mapsto MEM^T$, noting that $\ones^TMEM^T=\ones^TEM^T=0$
so $MEM^T$ is even. 

Given the basis $\{\ee_1+\ee_i\colon i>1\}$ for the subspace of even vectors, and any set of $n-1$ linearly independent even vectors $\vv_2,\dots,\vv_n$, there exists $M\in H$ such that $M(\ee_1+\ee_i)=\vv_i$ for all $i=2,\ldots n$. Indeed, take $M$ to be the matrix with first column $\ee_1$ and $i$th column $\ee_1+\vv_i$ for $i>1$. As $H$ is a group it follows that we can map any $r$-dimensional subspace of even vectors to any other $r$-dimensional subspace of even vectors by a suitable choice of~$M$. Thus, for any fixed $E$
of rank~$r$, the column spaces of the matrices $MEM^T$ hit every $r$-dimensional subspace of even vectors with the same frequency. Thus, the proportion of times $\ones$ lies in the column space is just the proportion of $r$-dimensional subspaces of even vectors that contain $\ones$, which by symmetry is $(2^r-1)/(2^{n-1}-1)$.
Picking $M$ and $E$ uniformly independently randomly with $\rank(E)=r$ means that $MEM^T$ is a uniform random even symmetric matrix
with rank~$r$, so the probability that $\ones$ is in the column space of $E$ conditioned on the rank is as claimed.
\end{proof}
Finally, we deduce the asymptotic results stated in the introduction.

\begin{proof}[Proof of Theorem~\ref{thm:q=2asympt}]
For $k\in \N\cup \{0\}$ and $n$ even we have
\begin{align*}
\Prb(Z_n=2^k) &= \frac{2^{n-k-1}-1}{2^{\binom{n}{2}}(2^{n-1}-1)} \cdot \prod_{i=1}^{\lfloor (n-1-k)/2 \rfloor}  (1-2^{-2i})^{-1}\cdot \prod_{i=k+1}^{n-1} (2^i-1)
\\&= \frac{2^{n-k-1}-1}{2^{n-1}-1}\prod_{i=1}^{\lfloor (n-1-k)/2 \rfloor}  (1-2^{-2i})^{-1} \cdot \prod_{i=1}^{n-1} (1-2^{-i}) \prod_{i=1}^{k} (2^i-1)^{-1} 
\\& \xrightarrow{} 2^{-k} \prod_{i=0}^{\infty}(1-2^{-2i-1}) \prod_{i=1}^{k} (2^i-1)^{-1}= \Prb(Z=2^k)\quad\text{as $n\to\infty$}.
\end{align*}
Analogously, for $X_n=X_n^{(2,0)}, Y_n=X_n^{(1,1)}$ and any $k\in \N$ we find that $$\lim_{n\to \infty} \Prb(X_n=2^k)=\lim_{n\to \infty} \Prb(Y_n=2^k) =c \prod_{i=1}^{k} (2^i-1)^{-1},$$
where $c= \prod_{i=0}^{\infty}(1-2^{-2i-1})$ as defined in the statement.
It remains to argue that $\Prb(Z_n=0)\xrightarrow{}\frac{1}{3}$. Note that $\Prb(Z_n=2^k)\leq 2^{-k+1}$ for all $k,n \in \N$, so $\Prb(Z_n=0)$ converges to $1-\sum_{k=0}^{\infty}\Prb(Z=2^k)$. Let
\[
f(x)=c\sum_{k=0}^\infty x^k\prod_{i=1}^k(2^i-1)^{-1}
\]
 be the generating function of the limiting distribution of $\tilde{X_n}$ where $2^{\tilde{X_n}}$ is the number $X_n$ of even/even partitions. Then $f(1)=1$ and 
 \begin{align*}
 f(2x)&=c\sum_{k=0}^\infty x^k (1+2^k-1)\prod_{i=1}^k(2^i-1)^{-1}\\
 &=c\sum_{k=0}^\infty x^k \Big(\prod_{i=1}^k(2^i-1)^{-1} + \prod_{i=1}^{k-1}(2^i-1)^{-1}\Big) =(1+x)f(x).
 \end{align*}
Furthermore, 
\[
g(x):=f(x/2)= c\sum_{k=0}^\infty 2^{-k} x^k\prod_{i=1}^k(2^i-1)^{-1}=\sum_{k=0}^{\infty}x^k\Prb(Z=2^k),
\]
so $1-\lim_{n \to \infty} \Prb(Z_n=0)=g(1)=f(1/2)$. Using $f(1)=1$ and $f(2x)=(1+x)f(x)$ we get at $x=1/2$ that $1=(3/2)f(1/2)$, so $f(1/2)=2/3$, hence $\lim_{n \to \infty} \Prb(Z_n=0)=1/3$.
\end{proof}

\bigskip
\noindent\textbf{Acknowledgments.}
We would like to thank the anonymous referees for their helpful suggestions.

\bibliographystyle{amsplain}
\bibliography{partitioning}
\end{document}